\newcommand{\cM}{\mathcal{M}}
\newcommand{\cN}{\mathcal{N}}
\newcommand{\cT}{\mathcal{T}}
\newcommand{\bR}{\mathbb{R}}
\newcommand{\dd}{ \mathrm{d}}
\renewcommand{\epsilon}{\varepsilon}
\newcommand{\ip}[2]{\langle #1,#2\rangle}
\numberwithin{equation}{section}
\newtheorem{theorem}{Theorem}[section]
\newtheorem{lemma}[theorem]{Lemma}
\newtheorem{proposition}[theorem]{Proposition}
\newtheorem{corollary}[theorem]{Corollary}
\theoremstyle{definition}
\newtheorem{definition}[theorem]{Definition}
\newtheorem{remark}[theorem]{Remark}
\begin{document}

\title{A Banach-Dieudonn\'{e} theorem for the space of bounded continuous functions on a separable metric space with the strict topology}

\author{
\renewcommand{\thefootnote}{\arabic{footnote}}
Richard Kraaij
\footnotemark[1]
}

\footnotetext[1]{
Delft Institute of Applied Mathematics, Delft University of Technology, Mekelweg 4, 
2628 CD Delft, The Netherlands, E-mail: \texttt{r.c.kraaij@tudelft.nl}.
}

\maketitle

\begin{abstract}
Let $X$ be a separable metric space and let $\beta$ be the strict topology on the space of bounded continuous functions on $X$, which has the space of $\tau$-additive Borel measures as a continuous dual space. We prove a Banach-Dieudonne\'{e} type result for the space of bounded continuous functions equipped with $\beta$. As a consequence, this space is hypercomplete and a Pt\'{a}k space. Additionally, the closed graph, inverse mapping and open mapping theorems holds for linear maps between space of this type.
\end{abstract}

{\bf Mathematics Subject Classifications (2010).} 46E10 (primary);
46E27 (secondary)

{\bf{Key words.} Banach-Dieudonne theorem; space of bounded continuous functions; strict topology; closed graph theorem;}



\section{Introduction and main result}

Let $(E,t)$ be a locally convex space. Denote by $E'$ the continuous dual space of $(E,t)$ and denote by $\sigma = \sigma(E',E)$ the weak topology on $E'$. We consider the following additional topologies on $E'$:
\begin{itemize}
\item $\sigma^f$, the finest topology coinciding with $\sigma$ on all $t$-equi-continuous sets in $E'$. 
\item $\sigma^{lf}$, the finest locally convex topology coinciding with $\sigma$ on all $t$-equi-continuous sets in $E'$.
\item $t^\circ$ the polar topology of $t$ defined on $E'$. $t^\circ$ is defined in the following way. Let $\cN$ be the collection of all $t$ pre-compact sets in $E$. A pre-compact set, is a set that is compact in the completion of $(E,t)$. Then the topology $t^\circ$ on $E'$ is generated by all seminorms of the type
\begin{equation*}
p_N(\mu) := \sup_{f \in N} |\ip{f}{\mu}| \qquad N \in \cN.
\end{equation*}
\end{itemize}

The Banach-Dieudonn\'{e} theorem for locally convex spaces is the following, see Theorems 21.10.1 and 21.9.8 in \cite{Ko69}.

\begin{theorem}[Banach-Dieudonn\'{e}] \label{theorem:BD}
Let $(E,t)$ be a metrizable locally convex space, then the topologies $\sigma^f$ and $t^\circ$ on $(E,t)'$ coincide. If $(E,t)$ is complete, these topologies also coincide with $\sigma^{lf}$.
\end{theorem}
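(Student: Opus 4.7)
Use metrizability to fix a decreasing countable base $(U_n)_{n \geq 1}$ of absolutely convex closed $0$-neighbourhoods of $E$. By the bipolar theorem and Banach--Alaoglu, the polars $M_n := U_n^\circ$ form an increasing sequence of absolutely convex, $\sigma$-compact, equicontinuous subsets of $E'$ exhausting $E'$, and every $t$-equicontinuous set is contained in some $M_n$. The proof naturally splits into the two inclusions $t^\circ \subseteq \sigma^f$ and $\sigma^f \subseteq t^\circ$, together with a short addendum identifying these topologies with $\sigma^{lf}$ in the complete case.

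The inclusion $t^\circ \subseteq \sigma^f$ rests on the Ascoli-type fact that on any $t$-equicontinuous set $M \subseteq E'$ the weak topology $\sigma$ coincides with the topology of uniform convergence on pre-compact subsets of $E$. Consequently, any basic $t^\circ$-neighbourhood $\{\mu : p_N(\mu) < \varepsilon\}$ meets each $M_n$ in a relatively $\sigma$-open set, so by the defining property of $\sigma^f$ it is $\sigma^f$-open.

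The reverse inclusion is the real content of the theorem and, I expect, the main obstacle. Given a $\sigma^f$-neighbourhood $V$ of $0$ in $E'$, the plan is to construct a pre-compact set $N \subseteq E$ with $N^\circ \subseteq V$. I build $N$ inductively, producing finite sets $F_n \subseteq U_n$ at each stage. Starting from $F_0 = \emptyset$ and having chosen $F_1,\dots,F_{n-1}$, the set $M_n \cap \bigl(\bigcup_{k<n} F_k\bigr)^\circ \setminus V$ is $\sigma$-closed in the $\sigma$-compact $M_n$, hence $\sigma$-compact; since $V \cap M_n$ is relatively $\sigma$-open in $M_n$, a standard finite-cover argument yields a finite $F_n \subseteq U_n$ with
\begin{equation*}
\Bigl(\bigcup_{k \leq n} F_k\Bigr)^\circ \cap M_n \;\subseteq\; V.
\end{equation*}
Then $N := \bigcup_n F_n$ is a null sequence (each $F_n \subseteq U_n$ and the $U_n$ shrink to $0$), hence pre-compact, and since $E' = \bigcup_n M_n$ the inductive bound yields $N^\circ \subseteq V$. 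The delicate step is arranging simultaneously that the chosen $F_n$ are finite, that they lie in the shrinking $U_n$, and that their polars carve out enough of $M_n$ to land inside $V$; this is where the absolute convexity of the $M_n$ and a careful rescaling into $U_n$ are crucial.

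For the completeness addendum, $\sigma^{lf} \subseteq \sigma^f$ is immediate from the definitions (a locally convex topology agreeing with $\sigma$ on equicontinuous sets is in particular \emph{some} topology doing so). Conversely, $t^\circ$ is locally convex, being defined by the seminorms $p_N$, and by the Ascoli fact of the second paragraph (combined with completeness, which identifies pre-compact sets in $E$ with relatively compact sets and hence fits the Ascoli conclusion cleanly) it agrees with $\sigma$ on every equicontinuous set. Therefore $t^\circ$ is a candidate in the definition of $\sigma^{lf}$, giving $t^\circ \subseteq \sigma^{lf}$, and all three topologies coincide.
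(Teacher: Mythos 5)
Your overall route (exhaust $E'$ by the polars $M_n=U_n^\circ$ of a countable neighbourhood base, prove $t^\circ\subseteq\sigma^f$ via the Ascoli-type coincidence of $\sigma$ and precompact convergence on equicontinuous sets, and prove $\sigma^f\subseteq t^\circ$ by inductively building a null sequence $N$ with $N^\circ\subseteq V$) is exactly the classical argument the paper refers to in K\"othe, and your observation that the $\sigma^{lf}$-statement follows because $t^\circ$ is locally convex and agrees with $\sigma$ on equicontinuous sets is correct (indeed, given the first part, completeness is not really needed there). However, the inductive step, which you yourself single out as the delicate point, is broken as stated. You require, at stage $n$, a finite $F_n\subseteq U_n$ with $\bigl(\bigcup_{k\le n}F_k\bigr)^\circ\cap M_n\subseteq V$, starting from the ``bad set'' $B_n:=M_n\cap\bigl(\bigcup_{k<n}F_k\bigr)^\circ\setminus V$. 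But every $\mu\in B_n$ lies in $M_n=U_n^\circ$, so $|\mu(x)|\le 1$ for \emph{all} $x\in U_n$; hence $B_n\subseteq F_n^\circ$ for any choice $F_n\subseteq U_n$, and no finite-cover argument can remove a single point of $B_n$. The step can only succeed if $B_n$ is already empty, which is precisely what you are trying to prove and is not supplied by your induction hypothesis; in fact the base case already fails, since $F_1\subseteq U_1$ forces $M_1\cap F_1^\circ=M_1$, and $M_1\subseteq V$ is false in general. Absolute convexity of $M_n$ and ``rescaling into $U_n$'' do not repair this: shrinking the vectors only enlarges their polars.

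The standard fix is an index shift that makes the induction hypothesis do the work. Arrange the hypothesis at stage $n$ as $M_n\cap\bigl(\bigcup_{k<n}F_k\bigr)^\circ\subseteq V$, and choose the new finite set $F_n\subseteq U_n$ so as to handle $M_{n+1}$: for any $\mu\in M_{n+1}\cap\bigl(\bigcup_{k<n}F_k\bigr)^\circ\setminus V$ the hypothesis forces $\mu\notin M_n=U_n^\circ$, i.e.\ there is $x\in U_n$ with $|\mu(x)|>1$; the $\sigma$-open sets $\{\nu:|\nu(x)|>1\}$ then cover this $\sigma$-compact bad set, and a finite subcover yields $F_n\subseteq U_n$ with $M_{n+1}\cap\bigl(\bigcup_{k\le n}F_k\bigr)^\circ\subseteq V$. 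The induction must be started with a finite set chosen from all of $E$ (not from $U_1$) so that $M_1\cap F_0^\circ\subseteq V$, which the same compactness argument provides since $0\in V$; prepending finitely many arbitrary vectors does not spoil the null-sequence (hence precompactness) property of $N=\bigcup_n F_n$, and then $N^\circ\subseteq V$ follows from $E'=\bigcup_n M_n$ exactly as you say. Two further small points you should make explicit: the passage from ``every $\sigma^f$-neighbourhood of $0$ contains some $N^\circ$'' to the inclusion of topologies needs that translates of $\sigma^f$-open sets are $\sigma^f$-open (translates of equicontinuous sets are equicontinuous, since $\mu_0+M_n\subseteq M_m$ for suitable $m$), and in the first inclusion one should check all $t^\circ$-open sets, not only basic neighbourhoods of $0$; both are routine.
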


The Banach-Dieudonn\'{e} theorem is of interest in combination with the closed graph theorem. For the discussion of closed graph theorems, we need some additional definitions. Considering a locally convex space $(E,t)$, we say that  
\begin{enumerate}[(a)]
\item $E'$ satisfies the \textit{Krein-Smulian} property if every $\sigma^f$ closed absolutely convex subset of $E'$ is $\sigma$ closed;
\item $(E,t)$ is a \textit{Pt\'{a}k space} if every $\sigma^f$ closed linear subspace of $E'$ is $\sigma$ closed;
\item $(E,t)$ is a \textit{infra Pt\'{a}k space} if every $\sigma$ dense $\sigma^f$ closed linear subspace of $E'$ equals $E'$.
\end{enumerate}

Infra-Pt\'{a}k spaces are sometimes also called \textit{$B_r$ complete} and Pt\'{a}k spaces are also known as \textit{$B$ complete} or \textit{fully complete}. Finally, a result of \cite{Ke58} shows that the Krein-Smulian property for $E'$ is equivalent to \textit{hypercompleteness} of $E$: the completeness of the space of absolutely convex closed neighbourhoods of $0$ in $(E,t)$ equipped with the Hausdorff uniformity. 

Clearly, we have that $E$ hypercomplete implies $E$ Pt\'{a}k implies $E$ infra Pt\'{a}k. Additionally, if $E$ is a infra Pt\'{a}k space, then it is complete by 34.2.1 in \cite{Ko79}. See also Chapter 7 in \cite{PCB87} for more properties of Pt\'{a}k spaces.

We have the following straightforward result, using that the absolutely convex closed sets agree for all locally convex topologies that give the same dual.

\begin{proposition} \label{proposition:hypercomplete_implied_by_sigma_is_sigmaf}
If $\sigma^{lf}$ and $\sigma^f$ coincide on $E'$, then $E$ is hypercomplete and infra Pt\'{a}k.
\end{proposition}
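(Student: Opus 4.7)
The plan is to prove the Krein--Smulian property for $E'$, namely that every $\sigma^f$-closed absolutely convex subset $C \subseteq E'$ is $\sigma$-closed. By the cited result of Kelley this gives hypercompleteness of $E$. Infra-Pt\'akness is then essentially free: if $V \subseteq E'$ is a $\sigma$-dense, $\sigma^f$-closed linear subspace, then $V$ is in particular absolutely convex, hence $\sigma$-closed by Krein--Smulian, and therefore $V = \overline{V}^{\sigma} = E'$ by $\sigma$-density.

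So fix $C \subseteq E'$ absolutely convex and $\sigma^f$-closed; the hypothesis then says $C$ is $\sigma^{lf}$-closed. Both $\sigma$ and $\sigma^{lf}$ are locally convex topologies on $E'$, and $\sigma \subseteq \sigma^{lf}$ because $\sigma$ itself is a locally convex topology agreeing with $\sigma$ on every $t$-equicontinuous set, hence one of those entering the definition of $\sigma^{lf}$. Here I would invoke the principle signalled in the comment preceding the proposition: if two locally convex topologies on a vector space share the same continuous dual, then their closed absolutely convex subsets coincide. This is a standard consequence of the bipolar theorem, since the closure of an absolutely convex set in any locally convex topology equals its bipolar with respect to the common dual pair, which depends only on the algebraic pairing. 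Applied to $\sigma$ and $\sigma^{lf}$ on $E'$, both having continuous dual $E$, this forces $C$ to be $\sigma$-closed as well, which is what we want.

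The one point demanding genuine work is the verification that $(E', \sigma^{lf})' = E$. One inclusion, $E \subseteq (E', \sigma^{lf})'$, is immediate from $\sigma \subseteq \sigma^{lf}$. For the reverse, my approach would be to use that every $t$-equicontinuous set in $E'$ is $\sigma$-relatively compact by Banach--Alaoglu, so that any locally convex topology on $E'$ compatible with the pair $\langle E', E\rangle$ (equivalently, lying between $\sigma$ and the Mackey topology $\mu(E', E)$) agrees with $\sigma$ on $t$-equicontinuous sets. In particular $\mu(E', E) \subseteq \sigma^{lf}$, and the reverse inclusion $\sigma^{lf} \subseteq \mu(E', E)$ — equivalent to the desired duality — is pinned down by Mackey--Arens together with Grothendieck's completeness theorem applied under the standing assumption $\sigma^{lf} = \sigma^{f}$. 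I expect this identification of the dual to be the main technical obstacle; once it is in place the closure-of-absolutely-convex-sets argument above closes the proof.
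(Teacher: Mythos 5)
Your overall skeleton is exactly the one the paper has in mind: reduce to the Krein--Smulian property for $E'$ and invoke Kelley's theorem for hypercompleteness, note that a linear subspace is absolutely convex so infra-Pt\'{a}kness comes for free, and transfer $\sigma^{lf}$-closedness of an absolutely convex set to $\sigma$-closedness by the bipolar argument for locally convex topologies with a common dual (the paper's entire ``proof'' is the sentence preceding the proposition asserting precisely this transfer). So you have correctly isolated the crux, namely the identification $(E',\sigma^{lf})'=E$.

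The gap is that this identification cannot be obtained from the standing assumption $\sigma^{lf}=\sigma^f$ by Mackey--Arens plus Grothendieck, because Grothendieck's completeness theorem pins the dual down differently: $(E',\sigma^{lf})'$ is always the completion $\hat{E}$. Indeed, a $\sigma^{lf}$-continuous functional is $\sigma$-continuous on every $t$-equicontinuous set, hence lies in $\hat{E}$; conversely $\sigma(E',\hat{E})$ coincides with $\sigma(E',E)$ on equicontinuous sets, so it is coarser than $\sigma^{lf}$ and every element of $\hat{E}$ is $\sigma^{lf}$-continuous. Thus $(E',\sigma^{lf})'=E$ holds exactly when $E$ is complete, and completeness does not follow from the hypothesis: take $E$ a proper dense subspace of a Banach space $F$ with the induced norm. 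Then $E'=F'$, the equicontinuous sets are the norm-bounded sets, $\sigma(F',E)$ and $\sigma(F',F)$ agree on bounded sets, so $\sigma^f$ is the bounded weak-star topology of $F'$, which is locally convex by the classical Banach--Dieudonn\'{e} theorem, whence $\sigma^f=\sigma^{lf}$; yet for $x_0\in F\setminus E$ the hyperplane $\{f\in F': f(x_0)=0\}$ is a proper, $\sigma$-dense, $\sigma^f$-closed subspace, so $E$ is neither infra-Pt\'{a}k nor does the Krein--Smulian property hold. (As a side remark, your intermediate claim that every topology compatible with $\langle E',E\rangle$ agrees with $\sigma$ on equicontinuous sets is also false: for an infinite-dimensional reflexive Banach space the Mackey topology on $E'$ is the norm topology, which does not agree with the weak-star topology on the dual ball.) So the step you flag as the ``main technical obstacle'' is not merely technical: it requires completeness of $E$ as an additional input --- the same input the paper's one-line justification tacitly uses when it speaks of topologies ``that give the same dual.'' With completeness granted, your argument closes; without it, it cannot.
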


This last property connects the Banach-Dieudonn\'{e} theorem to the closed graph theorem.

\begin{theorem}[Closed graph theorem, cf. 34.6.9 in \cite{Ko79}] \label{theorem:closed_graph}
Every closed linear map of a barrelled space $E$ to an infra-Pt\'{a}k space $F$ is continuous.
\end{theorem}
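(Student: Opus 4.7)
The plan is to prove that the natural domain of the algebraic adjoint,
\[ H := \{\mu \in F' : \mu \circ T \in E'\}, \]
equals all of $F'$, and then to upgrade the resulting weak continuity of $T$ to continuity using barrelledness of $E$. The infra-Pt\'{a}k hypothesis on $F$ is invoked as a tool: having shown $H$ to be a $\sigma(F',F)$-dense, $\sigma^f$-closed linear subspace of $F'$, it forces $H = F'$.

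First I would show $\sigma(F',F)$-density of $H$, which is the only place closedness of the graph $G(T) \subseteq E \times F$ enters. If some $y_0 \in F \setminus \{0\}$ were orthogonal to $H$, then $(0, y_0) \notin G(T)$, and Hahn--Banach separation of the point from the closed linear subspace $G(T)$ in the product $E \times F$ produces $\lambda \in E'$ and $\nu \in F'$ with $\lambda(x) + \nu(Tx) = 0$ for every $x \in E$ and $\nu(y_0) \neq 0$. The first identity forces $\nu \circ T = -\lambda \in E'$, so $\nu \in H$, contradicting $\nu(y_0) = 0$.

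Next I would verify $\sigma^f$-closedness: fix an equicontinuous $B \subseteq F'$, take a net $(\mu_\alpha) \subseteq H \cap B$ with $\mu_\alpha \to \mu$ in $\sigma(F', F)$, and check $\mu \in H$. Since $B$ is equicontinuous, hence pointwise bounded on $F$, the family $\{\mu_\alpha \circ T\} \subseteq E'$ is pointwise bounded on $E$; the barrelledness of $E$ (Banach--Steinhaus) then upgrades this to equicontinuity of the family, and the pointwise limit $\mu \circ T$ of an equicontinuous family of linear forms is automatically continuous. Applying the infra-Pt\'{a}k property to $H$ now yields $H = F'$, i.e.\ $T$ is $\sigma(E, E')$-to-$\sigma(F, F')$ continuous. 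To pass from weak continuity to topological continuity, observe that for any equicontinuous $B \subseteq F'$ the image $T^*(B) = \{\mu \circ T : \mu \in B\}$ is $\sigma(E', E)$-bounded (pointwise in $x \in E$), and barrelledness identifies $\sigma(E', E)$-bounded sets in $E'$ with equicontinuous ones --- which is exactly the condition that $T$ be continuous.

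The main subtlety is the $\sigma^f$-closedness step: since $T$ is not yet known to be continuous, equicontinuity of $\{\mu_\alpha\}$ on $F$ does not directly transfer to equicontinuity of $\{\mu_\alpha \circ T\}$ on $E$, and one must route through pointwise boundedness and invoke barrelledness of $E$. The two hypotheses --- barrelled on the domain and infra-Pt\'{a}k on the codomain --- enter at these two complementary points, and once they have forced $H = F'$, the closing step from weak to strong continuity is essentially automatic via barrelledness a second time.
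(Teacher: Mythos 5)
Your argument is correct, but note that the paper does not prove this theorem at all: it is quoted verbatim from K\"{o}the (34.6.9 in \cite{Ko79}) as a known result, so there is no in-paper proof to match. What you have written is a clean, self-contained version of the classical duality proof. K\"{o}the's own route factors through the notion of a \emph{nearly continuous} map (barrelledness of $E$ makes every linear map into a locally convex space nearly continuous, and one then proves that a closed, nearly continuous map into a $B_r$-complete space is continuous, using the dual characterization of $B_r$-completeness). You bypass near-continuity entirely and work directly with the subspace $H=\{\mu\in F': \mu\circ T\in E'\}$: closedness of the graph gives $\sigma(F',F)$-density via Hahn--Banach separation of $(0,y_0)$ from $G(T)$; Banach--Steinhaus on the barrelled domain gives $\sigma^f$-closedness (pointwise boundedness of $\{\mu_\alpha\circ T\}$ upgrades to equicontinuity, whose pointwise closure consists of continuous forms); the infra-Pt\'{a}k property then forces $H=F'$; and barrelledness is used a second time, together with the bipolar theorem ($T^{-1}(B^\circ)=(T^*B)^\circ$), to pass from weak continuity to continuity. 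This is exactly where the two hypotheses are supposed to enter, and each step is sound. Two cosmetic points: in the density step the contradiction is that $\nu\in H$ yet $\nu(y_0)\neq 0$, whereas $y_0$ was chosen orthogonal to $H$ (your phrasing inverts this); and in the $\sigma^f$-closedness step you should remark that the limit $\mu$ may be taken in the $\sigma$-closure of the equicontinuous set $B$, which is still equicontinuous, so relative closedness in every such $B$ is indeed what you verify.
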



As Banach and Fr\'{e}chet spaces are metrizable, they are infra-Pt\'{a}k space by Theorem \ref{theorem:BD} and Proposition \ref{proposition:hypercomplete_implied_by_sigma_is_sigmaf}. Additionally, they are both barrelled spaces, which implies that the closed graph theorem holds for closed linear maps from a Fr\'{e}chet space to a Fr\'{e}chet space. As a consequence, also the inverse and open mapping theorems hold. 

\smallskip

In this paper, we study the space of bounded and continuous functions on a separable metric space $X$ equipped with the \textit{strict} topology $\beta$. For the definition and a study of the properties of $\beta$, see \cite{Se72}. The study of the strict topology is motivated by the fact that it is a `correct' generalization of the supremum norm topology on $C_b(X)$ from the setting where $X$ is compact to the setting that $X$ is non-compact. Most importantly, for the strict topology, the dual space equals the space $\cM_\tau(X)$ of $\tau$-additive Borel measures on $X$. A Borel measure $\mu$ is called $\tau$-additive if for any increasing net $\{U_\alpha\}_{\alpha}$ of open sets, we have
\begin{equation*}
\lim_{\alpha} |\mu|(U_\alpha) = |\mu|\left(\cup_\alpha U_\alpha\right).
\end{equation*}

In the case that $X$ is metrizable by a complete separable metric, the space of $\tau$ additive Borel measures equals the space of Radon measures. Additionally, in this setting $(C_b(X),\beta)$ satisfies the Stone-Weierstrass theorem, cf. \cite{Ha76}, and the Arzela-Ascoli theorem.

\smallskip

The space $(C_b(X),\beta)$ is not barrelled unless $X$ is compact, Theorem 4.8 of \cite{Se72} so Theorem \ref{theorem:closed_graph} does not apply for this class of spaces. Thus, the following closed graph theorem by Kalton is of interest, as it puts more restrictions on the spaces serving as a range, relaxing the conditions on the spaces allowed as a domain.

\begin{theorem}[Kalton's closed graph theorem, Theorem 2.4 in \cite{Ka71}, Theorem 34.11.6 in \cite{Ko79}] \label{theorem:Kalton}
Every closed linear map from a Mackey space $E$ with weakly sequentially complete dual $E'$ into a transseparable infra-Pt\'{a}k space $F$ is continuous.
\end{theorem}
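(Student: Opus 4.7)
The plan is to reduce continuity of $T$ to the inclusion $T^{*}(F') \subseteq E'$ for the algebraic adjoint $T^{*}\colon F' \to E^{*}$, $T^{*}\varphi := \varphi \circ T$. Since $E$ carries its Mackey topology, this inclusion is equivalent to weak continuity of $T$, which in turn is equivalent to continuity. Setting
\begin{equation*}
M := \{\varphi \in F' : \varphi \circ T \in E'\},
\end{equation*}
the goal becomes showing $M = F'$. The infra-Pt\'ak hypothesis on $F$ reduces this to verifying that $M$ is both $\sigma(F',F)$-dense in $F'$ and $\sigma^{f}$-closed in $F'$.

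For $\sigma^{f}$-closedness, recall that a set is $\sigma^{f}$-closed iff its intersection with every equicontinuous subset of $F'$ is $\sigma(F',F)$-closed. Transseparability of $F$ implies that any such equicontinuous subset $B$ is $\sigma(F',F)$-metrizable (write $F = C_{n} + 2^{-n}V$ with $C_{n}$ countable, where $B \subseteq V^{\circ}$; then $\sigma(F',F)|_{B}$ is determined by evaluation on the countable set $\bigcup_{n} C_{n}$), reducing the criterion to sequential closedness. Given $(\varphi_{n}) \subset M \cap B$ with $\varphi_{n} \to \varphi$ in $\sigma(F',F)$, each $\varphi_{n} \circ T$ lies in $E'$ and $\varphi_{n}(Tx) \to \varphi(Tx)$ for every $x \in E$. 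Hence the sequence $(\varphi_{n} \circ T)$ is $\sigma(E',E)$-Cauchy, and weak sequential completeness of $E'$ furnishes a $\sigma(E',E)$-limit in $E'$ which must coincide with $\varphi \circ T$. Therefore $\varphi \in M$.

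The $\sigma(F',F)$-density of $M$, equivalently the statement that $M$ separates points of $F$, is the main obstacle and is the step where the closed-graph hypothesis on $T$ enters essentially. For $y \in F$ outside the closure of $T(E)$, Hahn--Banach produces $\varphi \in F'$ with $\varphi(y) \ne 0$ and $\varphi|_{T(E)}=0$; such $\varphi$ lies trivially in $M$ since $\varphi \circ T = 0 \in E'$. The substantial case is $y \in \overline{T(E)}\setminus\{0\}$: here one exploits transseparability to work along a countable dense subset of $F$, constructs approximating functionals in $F'$ matching prescribed values at these points, and uses the closed graph of $T$ together with weak sequential completeness of $E'$ to check that the compositions with $T$ remain in $E'$, placing these approximating functionals in $M$. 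Once $M$ is both $\sigma$-dense and $\sigma^{f}$-closed, the infra-Pt\'ak property of $F$ forces $M = F'$, and the Mackey property of $E$ upgrades the resulting weak continuity of $T$ to continuity.
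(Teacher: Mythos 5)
Your overall architecture is the standard one behind Kalton's theorem; note that the paper itself does not reprove this result, it cites Kalton/K\"othe and only adds the remark that separability can be weakened to transseparability because the latter still makes equicontinuous subsets of $F'$ metrizable for $\sigma(F',F)$ -- which is exactly the reduction you reproduce with the decomposition $F = C_n + 2^{-n}V$. Your $\sigma^f$-closedness step is correct and is precisely where the hypotheses enter: metrizability of equicontinuous sets reduces closedness to sequences, and $\sigma(E',E)$-sequential completeness of $E'$ produces the limit functional identified with $\varphi \circ T$, so $M \cap B$ is relatively $\sigma(F',F)$-closed in every equicontinuous $B$.

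The genuine gap is the $\sigma(F',F)$-density of $M$, precisely the step you call ``the main obstacle'' and leave as a vague construction (``approximating functionals matching prescribed values'', checked via the closed graph and weak sequential completeness). As written this is not an argument, and it is also not where the difficulty lies: density follows in a few lines from the closed graph alone. The graph $G(T)$ is a linear subspace of $E \times F$, closed for the product topology, hence closed for $\sigma(E \times F, E' \times F')$ since it is convex. Its annihilator in $E' \times F'$ is exactly $\left\{ (-\varphi \circ T, \varphi) : \varphi \in M \right\}$, because $(\psi,\varphi) \in G(T)^{\perp}$ means $\psi(x) + \varphi(Tx) = 0$ for all $x$, which forces $\varphi \circ T = -\psi \in E'$, i.e.\ $\varphi \in M$. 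By the bipolar theorem, if $y \in F$ satisfies $\varphi(y) = 0$ for all $\varphi \in M$, then $(0,y)$ lies in the weak closure of $G(T)$, which equals $G(T)$, so $y = T(0) = 0$. Hence $M$ separates points of $F$ and is $\sigma(F',F)$-dense; no transseparability, no weak sequential completeness, and no case distinction on $\overline{T(E)}$ are needed here. With that step repaired, the remainder of your scheme (infra-Pt\'{a}k property gives $M = F'$, hence $T$ is weakly continuous, and the Mackey property of $E$ upgrades weak continuity to continuity) is sound and matches the cited proof.
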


\begin{remark}
Note that this result is normally stated for separable infra-Pt\'{a}k space $F$. In the proof of Kalton's closed graph theorem 34.11.6 in \cite{Ko79}, separability is only used to obtain that weakly compact sets of the dual $E'$ are metrizable. For this transseparability suffices by Lemma 1 in \cite{Pf76}.
\end{remark}

A class of spaces, more general than the class of Fr\'{e}chet spaces, satisfying the conditions for both the range and the domain space in Kalton's closed graph theorem, would be an interesting class of spaces to study. In this paper, we show that $(C_b(X),\beta)$, for a separable metric space $X$ belongs to this class. In particular, the main result in this paper is that $(C_b(X),\beta)$ satisfies the conclusions of the Banach-Dieudonn\'{e} theorem.

First, we introduce an auxiliary result and the definition of a $k$-space. 

\begin{proposition} \label{proposition:Cb_strong_Mackey}
$(C_b(X),\beta)$ is a strong Mackey space. In other words, $\beta$ is a Mackey topology and the weakly compact sets in $\cM_\tau(X)$ and the weakly closed $\beta$ equi-continuous sets coincide.
\end{proposition}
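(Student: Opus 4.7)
The plan is to reduce both claims of the proposition to a single assertion: a subset $K \subseteq \cM_\tau(X)$ is $\sigma(\cM_\tau(X),C_b(X))$-compact if and only if it is $\sigma$-closed and $\beta$-equicontinuous. One direction is easy: every $\beta$-equicontinuous set is relatively $\sigma$-compact by Banach--Alaoglu, so a weakly closed $\beta$-equicontinuous set is weakly compact. The Mackey part of the proposition follows at once from the harder direction, because $\beta$ is always coarser than the Mackey topology $\mu(C_b(X),\cM_\tau(X))$ by Mackey--Arens, while the equality ``weakly compact $=$ weakly closed $\beta$-equicontinuous'' shows that polars of absolutely convex weakly compact sets are already $\beta$-neighbourhoods, giving $\mu \subseteq \beta$.

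The heart of the argument is therefore: every $\sigma$-compact $K \subseteq \cM_\tau(X)$ is $\beta$-equicontinuous. I would use the characterization of $\beta$-equicontinuous sets from \cite{Se72}: $M \subseteq \cM_\tau(X)$ is $\beta$-equicontinuous if and only if $\sup_{\mu \in M}\vn{\mu} < \infty$ and $M$ is uniformly tight, meaning for every $\epsilon > 0$ there is a compact $K_\epsilon \subseteq X$ with $|\mu|(X \setminus K_\epsilon) \le \epsilon$ for all $\mu \in M$. Uniform total variation boundedness follows from the uniform boundedness principle applied to the dual pairing with $C_b(X)$, since $\sigma$-boundedness gives $\sup_{\mu \in K}|\ip{f}{\mu}|<\infty$ for every $f \in C_b(X)$, and the total variation norm on $\cM_\tau(X)$ can be recovered as a supremum over $\{f \in C_b(X) : \vn{f}_\infty \le 1\}$.

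Uniform tightness is the Prokhorov direction and the real content of the proposition. Here I would invoke a Prokhorov-type theorem valid for $\tau$-additive Borel measures on a separable metric space: every $\sigma$-compact subset of $\cM_\tau(X)$ is uniformly tight. Separability enters decisively, as it supplies a countable basis that allows a diagonal extraction inside $K$ to rule out escape of mass. Concretely, one fixes a countable dense subset $\{x_n\}$ of $X$, considers for each $\epsilon,\delta>0$ the open sets $U_{N,\delta} = \bigcup_{n \le N} B(x_n,\delta)$, and uses $\tau$-additivity together with weak compactness of $K$ to show that for each $\delta>0$ there exists $N$ with $\sup_{\mu \in K}|\mu|(X \setminus U_{N,\delta}) \le \epsilon$; a standard diagonal construction then produces a totally bounded (hence precompact in the completion of $X$) set carrying mass at least $\vn{\mu}-\epsilon$ for every $\mu \in K$.

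The main obstacle I anticipate is precisely this Prokhorov direction in the $\tau$-additive, not-necessarily-complete, setting: the classical statement assumes Polish $X$ and Radon measures. One either finds a version in the literature that applies directly, or passes to the completion $\widehat{X}$, transports $K$ via the canonical embedding $\cM_\tau(X) \hookrightarrow \cM_\tau(\widehat{X})$, applies the classical result there, and transfers uniform tightness back to $X$ using that $\tau$-additive measures on $X$ give no mass outside a $\sigma$-compact subset of $\widehat{X}$ contained in $X$.
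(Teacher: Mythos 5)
Your overall reduction (Alaoglu--Bourbaki for the easy direction, and the Mackey--Arens argument deducing $\beta = \mu(C_b(X),\cM_\tau(X))$ once ``weakly compact $=$ weakly closed equicontinuous'' is known) is sound, but the heart of your argument rests on a characterization of $\beta$-equicontinuity that is the wrong one for this topology. ``Norm bounded $+$ uniformly tight over compact subsets of $X$'' characterizes equicontinuity for the substrict topology $\beta_0$, whose dual is the space of \emph{tight} (Radon) measures; the topology $\beta$ in this paper has dual $\cM_\tau(X)$, and on a general separable metric space $\tau$-additive measures need not be tight. A Bernstein set $B\subseteq[0,1]$ carries the trace of Lebesgue outer measure as a Borel (hence, $B$ being second countable, $\tau$-additive) measure whose compact subsets are all null; the singleton of this measure is weakly compact and $\beta$-equicontinuous, yet not uniformly tight in your sense. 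Consequently the ``Prokhorov direction'' you invoke (every $\sigma$-compact subset of $\cM_\tau(X)$ is uniformly tight over compacts of $X$) is simply false in the paper's setting, and your proposed repair via the completion $\widehat{X}$ relies on the assertion that a $\tau$-additive measure on $X$ is carried by a $\sigma$-compact subset of $X$ --- which is precisely the Radon property that fails in the same example. Note also that your own diagonal construction only delivers uniform mass on totally bounded sets (compact in $\widehat{X}$), never on compacts of $X$, which is consistent with the above.

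The correct equicontinuity criterion for $\beta$ is the one the paper actually uses later (Theorem 6.1 of \cite{Se72}): a set is $\beta$-equicontinuous iff it is bounded and uniformly $\tau$-additive, equivalently contained in $K_1-K_2$ with $K_1,K_2$ weakly compact sets of nonnegative measures. Proving that an arbitrary weakly compact set of \emph{signed} measures satisfies this is the genuinely hard step (weak convergence gives little control of total variations, so one needs a Dieudonn\'{e}--Grothendieck/Nikodym-type argument, even in the Polish case), and your sketch does not supply it. The paper sidesteps all of this: its proof of the proposition is a citation to Theorem 5.6 in \cite{Se72} together with Corollary 6.3.5 and Proposition 7.2.2(iv) in \cite{Bo07}, which package exactly this strong-Mackey property for $\beta$. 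As written, your argument would need either that citation or a replacement of the tightness criterion by uniform $\tau$-additivity, with a new proof of the compactness-implies-uniform-$\tau$-additivity step for signed measures.
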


\begin{proof}
This follows by Theorem 5.6 in \cite{Se72}, Corollary 6.3.5 and Proposition 7.2.2(iv) in \cite{Bo07}.
\end{proof}

This result is relevant in view of the defining properties of $\sigma^f$. We say that a topological space $(Y,t)$ is a k-space if a set $A \subseteq Y$ is $t$-closed if and only if $A \cap K$ is $t$-closed for all $t$-compact sets $K \subseteq Y$. The strongest topology on $Y$ coinciding on $t$-compact sets with the original topology $t$ is denoted by $kt$ and is called the $k$-ification of $t$. The closed sets of $kt$ are the sets $A$ in $Y$ such that $A \cap K$ is $t$-closed in $Y$ for all $t$-compact sets $K \subseteq Y$.

We see that for a strong Mackey space $E$, $\sigma^f = k\sigma$ on $E'$.

\smallskip

The main result of this paper is that $(C_b(X),\beta)$ also satisfies the conclusion of the Banach-Dieudonn\'{e} theorem. 

\begin{theorem} \label{theorem:main_theorem}
Let $X$ be a separably metrizable space. Consider the space $(C_b(X),\beta)$, where $\beta$ is the strict topology. Then $\sigma^{lf}$, $\sigma^f$, $k\sigma$ and $\beta^\circ$ coincide on $\cM_\tau(X)$.
\end{theorem}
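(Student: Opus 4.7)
The strategy is to establish the chain
\begin{equation*}
\beta^\circ \leq \sigma^{lf} \leq \sigma^f = k\sigma \leq \beta^\circ
\end{equation*}
of coarser-than relations on $\cM_\tau(X)$, forcing all four topologies to coincide. The equality $\sigma^f = k\sigma$ is an immediate consequence of Proposition \ref{proposition:Cb_strong_Mackey}: the strong Mackey property identifies the weakly closed $\beta$-equicontinuous sets with the weakly compact sets in $\cM_\tau(X)$, and since the weak closure of any $\beta$-equicontinuous set is again equicontinuous, the two classes define the same finest topology agreeing with $\sigma$ on them. The inclusion $\sigma^{lf} \leq \sigma^f$ is tautological. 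The inclusion $\beta^\circ \leq \sigma^{lf}$ is the standard observation that $\beta^\circ$ is locally convex and agrees with $\sigma$ on each $\beta$-equicontinuous set $K$: given a $\beta$-precompact $N \subset C_b(X)$ and a $\beta$-neighborhood $V$ of $0$ with $V^\circ \supset K$, one covers $N$ by finitely many translates $f_i + V$, so that uniform convergence on $N$ restricted to $K$ reduces to pointwise convergence at $f_1, \ldots, f_k$.

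The main work is the inclusion $k\sigma \leq \beta^\circ$. Given an absolutely convex $k\sigma$-neighborhood $W$ of $0$ in $\cM_\tau(X)$, the goal is to produce a $\beta$-precompact $N \subset C_b(X)$ with $N^\circ \subset W$. This is the analog of the classical Banach-Dieudonn\'e construction, which for metrizable $(C_b(X),\beta)$ would proceed by induction along a countable decreasing $\beta$-neighborhood base $(V_n)$ of $0$, extracting finite sets $F_n \subset V_n$ whose union serves as $N$. The induction step uses, at each stage, $\sigma$-compactness of the equicontinuous polar $V_n^\circ$ together with $\sigma$-closedness of $W^c$ on equicontinuous sets to pass from finite-function polars to a small enough $F_n^\circ$ via a finite-intersection/compactness argument.

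Since $\beta$ is typically not metrizable on $C_b(X)$, no countable $\beta$-neighborhood base at $0$ is available, and substituting the right ingredient is the principal obstacle. The replacement supplied by separable metrizability of $X$ is that every $\beta$-equicontinuous set $K \subset \cM_\tau(X)$ is weakly metrizable: by strong Mackey $K$ is weakly compact and uniformly tight, and uniform tightness combined with separability of $X$ yields a countable $\sigma$-determining subset of $C_b(X)$ on $K$. I would fix an absorbing increasing sequence of absolutely convex weakly compact sets $K_n \subset \cM_\tau(X)$, each metrizable in $\sigma$, and carry out the Banach-Dieudonn\'e-type induction by working with the metrizability of each $K_n$ in place of metrizability of $(C_b(X),\beta)$, choosing the $F_n$ inside a judiciously shrinking sequence of $\beta$-neighborhoods so that $F_n \to 0$ in $\beta$ and $N = \bigcup_n F_n$ is $\beta$-precompact. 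Orchestrating this double induction --- sequentially controlling the source space $C_b(X)$ while using countable determining families on each metrizable $K_n$ in the dual --- is where the main care is required.
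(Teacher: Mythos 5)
There is a genuine gap, and it sits exactly where you place the main work. Your plan to run a Banach--Dieudonn\'{e}-type induction requires a countable cofinal structure on the dual side: you propose to ``fix an absorbing increasing sequence of absolutely convex weakly compact sets $K_n \subset \cM_\tau(X)$''. No such sequence exists when $X$ is not compact. Take $X = \bN$: then $C_b(X) = \ell^\infty$, $\cM_\tau(X) = \ell^1$, and by the Schur property the $\sigma(\ell^1,\ell^\infty)$-compact sets are norm compact, so an absorbing sequence of weakly compact sets would make $\ell^1$ a countable union of norm-compact sets, contradicting Baire category. Equivalently, a countable cofinal family of $\beta$-equicontinuous sets would force $\beta$ (which is the Mackey topology, by Proposition \ref{proposition:Cb_strong_Mackey}) to have a countable neighbourhood base, i.e.\ to be metrizable, which fails for non-compact $X$. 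Metrizability of each individual weakly compact set (true, and proved in Lemma \ref{lemma:compacts_metrizable}) is not a substitute for the countable fundamental sequence of equicontinuous sets $V_n^\circ$ that drives the classical induction, so the ``double induction'' has nothing to induct along.

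A second problem: you start the key step from an \emph{absolutely convex} $k\sigma$-neighbourhood $W$. Showing that every such $W$ contains the polar of a $\beta$-precompact set only proves $\sigma^{lf} \le \beta^\circ$; to conclude $k\sigma \le \beta^\circ$ you would need $k\sigma$ to possess a base of absolutely convex neighbourhoods, i.e.\ local convexity of $\sigma^f = k\sigma$ --- and that is precisely the crux of the theorem (in general $\sigma^f$ is not even a vector topology). So even granting the induction, your chain would collapse to $\beta^\circ = \sigma^{lf} \le \sigma^f$, leaving the essential equality $\sigma^{lf} = \sigma^f$ unproved. The paper's route avoids both obstacles: it identifies $k\sigma$ with the quotient topology $\cT$ of $\sigma_+ \times \sigma_+$ under $q(\mu,\nu) = \mu - \nu$ (Hahn--Jordan), proves $q$ is an open map, deduces that $\cT = \sigma^f$ is a locally convex vector topology, hence $\sigma^{lf} = \sigma^f$, and then obtains $\sigma^{lf} = \beta^\circ$ abstractly from hypercompleteness (hence completeness) of $(C_b(X),\beta)$ via 21.9.8 in K\"{o}the, rather than by constructing precompact polars by hand. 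If you want to salvage your strategy, the part you must supply is a proof that $k\sigma$ is locally convex; the quotient/openness argument is currently the only known mechanism for that in this setting.
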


In view of Kalton's closed graph theorem, we mention two additional relevant results, that will be proven below.

\begin{lemma} \label{lemma:transseparable}
Let $X$ be a separably metrizable space. Then $(C_b(X),\beta)$ is transseparable.
\end{lemma}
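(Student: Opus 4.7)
The plan is to establish transseparability by showing that every weakly compact subset of the dual $\cM_\tau(X)$ is metrizable for the weak topology $\sigma(\cM_\tau(X), C_b(X))$. The key input will be a countable subfamily of $C_b(X)$ separating the points of $\cM_\tau(X)$, constructed using the separable metrizability of $X$.

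First I would invoke the standard characterization that a locally convex space $(E, t)$ is transseparable if and only if every $t$-equicontinuous subset of $E'$ is $\sigma(E', E)$-metrizable. One direction is the content of Pfister's Lemma~1 in \cite{Pf76} (already cited in the remark after Theorem~\ref{theorem:Kalton}); the converse follows from the classical fact that a normed space is separable iff the unit ball of its dual is weak$^*$-metrizable, applied to the semi-normed space $(C_b(X), p_U)$ for an absolutely convex $\beta$-neighborhood $U$ and its gauge $p_U$. By Proposition~\ref{proposition:Cb_strong_Mackey}, $\beta$-equicontinuous subsets of $\cM_\tau(X)$ are exactly the weakly relatively compact ones, so the task reduces to showing that each weakly compact $K \subseteq \cM_\tau(X)$ is metrizable for $\sigma(\cM_\tau(X), C_b(X))$.

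To construct the countable separating family, I would embed $X$ topologically into a compact metrizable space $\tilde X$ (for instance the Hilbert cube, by Urysohn), pick a countable sup-dense subset $\{g_n\}_n$ of $C(\tilde X)$ (possible since $\tilde X$ is compact metric), and set $\cF := \{g_n|_X : n \in \bN\} \subseteq C_b(X)$. Given $\lambda \in \cM_\tau(X)$ with $\ip{g_n|_X}{\lambda} = 0$ for every $n$, uniform density in $C(\tilde X)$ forces $\int_X g|_X \,\dd\lambda = 0$ for all $g \in C(\tilde X)$. Defining the finite signed Borel measure $\tilde\lambda$ on $\tilde X$ by $\tilde\lambda(B) := \lambda(B \cap X)$ — well-defined, since $B \cap X$ is Borel in $X$ for every Borel $B \subseteq \tilde X$ — a standard approximation argument from simple functions yields $\int_{\tilde X} g \,\dd\tilde\lambda = \int_X g|_X \,\dd\lambda = 0$ for all $g \in C(\tilde X)$. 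Riesz representation on the compact metric space $\tilde X$ then forces $\tilde\lambda = 0$, whence $\lambda = 0$.

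With $\cF$ in hand, the weakly continuous, injective map $\Phi \colon \cM_\tau(X) \to \bR^\cF$ given by $\mu \mapsto (\ip{f}{\mu})_{f \in \cF}$ restricts on any weakly compact $K$ to a homeomorphism onto its image, which is metrizable as a subspace of the countable product $\bR^\cF$. The main technical step is the construction of the separating family via the pushforward: once one verifies that $\tilde\lambda$ is a well-defined finite signed Borel measure on $\tilde X$ and that the identity $\int_{\tilde X} g \,\dd\tilde\lambda = \int_X g|_X \,\dd\lambda$ holds for all bounded Borel $g$, the remaining steps are routine. I expect no serious obstacle beyond this bookkeeping, since the topological ingredients (Urysohn embedding, separability of $C(\tilde X)$, metrizability of compact Hausdorff spaces with a countable separating family of continuous functions) are all classical.
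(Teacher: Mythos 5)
Your proof is correct, and while the reduction is the same as the paper's --- via Proposition \ref{proposition:Cb_strong_Mackey} and Pfister's Lemma~1 in \cite{Pf76}, transseparability comes down to weak-$*$ metrizability of the $\beta$-equicontinuous (equivalently, relatively weakly compact) subsets of $\cM_\tau(X)$ --- the way you establish that metrizability is genuinely different from the paper's. The paper gets it from its Lemma \ref{lemma:compacts_metrizable}: a weakly compact $K$ is equicontinuous, hence by Theorem 6.1(c) of \cite{Se72} sits inside $q(K_1\times K_2)$ with $K_1,K_2$ compact in $(\cM_{\tau,+}(X),\sigma_+)$, which is metrizable by Theorem 8.3.2 of \cite{Bo07}; metrizability of $K$ then follows from Lemma 1.2 of \cite{Ka71} (or 34.11.2 in \cite{Ko79}) since $K$ is a compact subset of a continuous Hausdorff image of a compact metrizable set. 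You instead exhibit a countable subfamily of $C_b(X)$ separating the points of $\cM_\tau(X)$ (restrictions to $X$ of a sup-dense sequence in $C(\tilde X)$ for a metrizable compactification $\tilde X$, with injectivity via the trace measure and Riesz uniqueness on $\tilde X$), and then use the standard fact that a compact Hausdorff set admitting a countable separating family of continuous functions embeds in $\bR^{\bN}$ and is metrizable. Your route is more self-contained and elementary: it bypasses Bogachev's metrizability theorem for $\sigma_+$, the Jordan-type decomposition of equicontinuous sets from \cite{Se72}, and the machinery around the quotient map $q$, and it actually shows slightly more (the whole of $(\cM_\tau(X),\sigma)$ is submetrizable, which gives Lemma \ref{lemma:compacts_metrizable} as a byproduct). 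What the paper's route buys is economy within its own architecture: the decomposition through $\cM_{\tau,+}(X)^2$ and the map $q$ is set up anyway for Theorem \ref{theorem:main_theorem}, so Lemma \ref{lemma:compacts_metrizable} comes essentially for free, whereas your argument needs the compactification and pushforward bookkeeping (all of which you handle correctly). One small remark: the direction of Pfister's characterization you need (metrizable equicontinuous sets imply transseparability) is exactly how the paper itself invokes Lemma~1 of \cite{Pf76}, so your extra gauge-seminorm argument for that implication, while fine, is not strictly necessary.
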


\begin{lemma} \label{lemma:weak_sequential_completeness}
Let $X$ be separably metrizable, then the dual $\cM_\tau(X)$ of $(C_b(X),\beta)$ is weakly sequentially complete.
\end{lemma}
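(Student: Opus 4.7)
\medskip

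\noindent\textbf{Plan.} Let $(\mu_n)_n \subseteq \cM_\tau(X)$ be a weakly Cauchy sequence, so that $L(f) := \lim_n \int f \, d\mu_n$ exists for every $f \in C_b(X)$ and defines a linear functional on $C_b(X)$. Since $\cM_\tau(X)$ is the continuous dual of $(C_b(X),\beta)$, the task reduces to showing that $L$ is $\beta$-continuous; the functional is then represented by a measure $\mu \in \cM_\tau(X)$, which is necessarily the weak limit of $(\mu_n)$.

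The first step is a uniform boundedness argument. Viewing each $\mu_n$ as a continuous linear functional on the Banach space $(C_b(X),\|\cdot\|_\infty)$ with operator norm equal to $|\mu_n|(X)$, the Banach--Steinhaus theorem applied to the pointwise convergent sequence $(\mu_n(f))_n$ yields a uniform bound $M := \sup_n |\mu_n|(X) < \infty$. In particular $|L(f)| \le M\|f\|_\infty$, so $L$ extends to a bounded linear functional on the sup-norm unit ball.

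The second (and main) step is to establish $\tau$-additivity of $L$. Because $X$ is separably metrizable, hence hereditarily Lindel\"{o}f, a countably additive Borel measure on $X$ is automatically $\tau$-additive; thus it suffices to verify that $L(f_k) \to 0$ for every sequence $f_k \in C_b(X)$ with $0 \le f_k \le 1$ and $f_k \downarrow 0$ pointwise. For each fixed $n$, $\mu_n(f_k) \to 0$ by the $\tau$-additivity of $\mu_n$; the hard part is to interchange $\lim_n$ and $\lim_k$. I would achieve this via a Vitali--Hahn--Saks-type statement: the weakly Cauchy hypothesis forces uniform $\tau$-additivity, $\sup_n |\mu_n(f_k)| \to 0$. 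One concrete route is to embed $X$ isometrically into a Polish space $Y$ (e.g.\ the Hilbert cube, via the Urysohn embedding), transfer each $\mu_n$ to a Borel measure on $Y$ via $\hat{\mu}_n(A) := \mu_n(A \cap X)$, and apply Prokhorov's theorem on $Y$ together with the Cauchy hypothesis to deduce uniform tightness of $(\hat{\mu}_n)$; this transports back to uniform $\tau$-additivity on $X$. Combined with Step 1, Sentilles' Riesz representation theorem for the strict topology \cite{Se72} then produces $\mu \in \cM_\tau(X)$ with $L(f) = \int f \, d\mu$ for all $f \in C_b(X)$, and hence $\mu_n \to \mu$ weakly, establishing weak sequential completeness.
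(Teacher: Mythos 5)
Your overall outline (uniform boundedness via Banach--Steinhaus, then representation of the limit functional $L$ as a $\sigma$-smooth functional, then $\tau$-additivity for free because a separably metrizable space is hereditarily Lindel\"{o}f) is a sensible skeleton, and the first and last steps are fine. But the step you yourself identify as ``the hard part'' --- interchanging $\lim_n$ and $\lim_k$, i.e.\ uniform $\sigma$-smoothness of the sequence $(\mu_n)$ --- is exactly the entire mathematical content of the lemma, and your sketch does not actually supply it. The paper disposes of this by citing Theorem 8.7.1 in \cite{Bo07} (alternatively, via the Mazur-space route through Theorem 8.1 in \cite{Se72}, \cite{Wi81} and \cite{We68}); what you leave as ``a Vitali--Hahn--Saks-type statement'' is precisely that cited theorem, so as written the proposal assumes what is to be proved.

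The concrete route you offer does not close the gap. First, Prokhorov's theorem goes from uniform tightness to relative compactness; the converse direction you invoke --- that a weakly Cauchy sequence of \emph{signed} Borel measures on a Polish space is uniformly tight --- is not Prokhorov but a Dieudonn\'{e}--Grothendieck/Vitali--Hahn--Saks-type result, i.e.\ again the nontrivial statement you are trying to establish (for nonnegative measures one has Le Cam's theorem, but your $\mu_n$ are signed, and the Hahn--Jordan parts of a weakly Cauchy sequence need not be weakly Cauchy). Second, even granting uniform tightness of the transferred measures $\hat{\mu}_n$ on the Hilbert cube $Y$, the claimed ``transport back to uniform $\tau$-additivity on $X$'' does not follow: a compact $K \subseteq Y$ carrying most of the mass meets $X$ in a set that is in general not compact in $X$, and a sequence $f_k \downarrow 0$ in $C_b(X)$ need not extend to $Y$, so no Dini-type argument applies on $K \cap X$. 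The workable version of the embedding idea is different: identify the limit functional with a Radon measure $\hat{\mu}$ on $Y$ and then prove that $X$ has full $\hat{\mu}$-outer measure, so that $\hat{\mu}$ restricts to an element of $\cM_\tau(X)$; that thickness argument is missing and is not automatic. A minor further point: for the representation step you should note that for a signed bounded functional, $\sigma$-smoothness of $L$ must be upgraded to $\sigma$-smoothness of $|L|$ (a standard lattice fact, but it needs to be invoked) before you obtain a countably additive signed Borel measure.
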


As a consequence of Theorem \ref {theorem:main_theorem} and Lemma's \ref{lemma:transseparable} and \ref{lemma:weak_sequential_completeness}, $(C_b(X),\beta)$ satisfies both the conditions to serve as a range, and as a domain in Kalton's closed graph theorem. We have the following important corollaries.

\begin{corollary}[Closed graph theorem] \label{corollary:closed_graph}
Let $X, Y$ be separably metrizable spaces, then a closed linear map from $(C_b(X),\beta)$ to $(C_b(Y),\beta)$ is continuous.
\end{corollary}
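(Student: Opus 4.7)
The plan is simply to verify that the hypotheses of Kalton's closed graph theorem (Theorem \ref{theorem:Kalton}) are satisfied by the domain $(C_b(X),\beta)$ and the range $(C_b(Y),\beta)$. All the necessary pieces are already assembled in the preceding statements; the corollary is essentially a collation.

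For the domain side, I need $(C_b(X),\beta)$ to be a Mackey space with weakly sequentially complete dual. The Mackey property is immediate from Proposition \ref{proposition:Cb_strong_Mackey}, which in fact gives the stronger statement that $\beta$ is a strong Mackey topology. The weak sequential completeness of $\cM_\tau(X)$ is precisely the content of Lemma \ref{lemma:weak_sequential_completeness}. So the domain hypothesis is checked with a single citation of each.

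For the range side, I need $(C_b(Y),\beta)$ to be a transseparable infra-Pt\'{a}k space. Transseparability is Lemma \ref{lemma:transseparable}. The infra-Pt\'{a}k property requires a small chain: Theorem \ref{theorem:main_theorem} tells us that $\sigma^{lf}$ and $\sigma^f$ coincide on $\cM_\tau(Y)$, and then Proposition \ref{proposition:hypercomplete_implied_by_sigma_is_sigmaf} converts this coincidence into the statement that $(C_b(Y),\beta)$ is hypercomplete and, in particular, infra-Pt\'{a}k.

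Having verified all four hypotheses, I conclude by invoking Theorem \ref{theorem:Kalton} directly: any closed linear map $T: (C_b(X),\beta) \to (C_b(Y),\beta)$ is continuous. There is no real obstacle here at this stage of the paper; all the technical work sits in Theorem \ref{theorem:main_theorem} (the Banach-Dieudonn\'e type result) and in Lemmas \ref{lemma:transseparable} and \ref{lemma:weak_sequential_completeness}. The corollary itself is a one-line application of Kalton's theorem, and the proof I would write would amount to a short paragraph citing these four earlier results in the correct roles.
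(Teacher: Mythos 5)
Your proposal is correct and follows essentially the same route as the paper: verify the hypotheses of Kalton's closed graph theorem using Theorem \ref{theorem:main_theorem} together with Proposition \ref{proposition:hypercomplete_implied_by_sigma_is_sigmaf} for the infra-Pt\'{a}k property of the range, and Lemmas \ref{lemma:transseparable} and \ref{lemma:weak_sequential_completeness} for transseparability and weak sequential completeness. If anything, you are slightly more careful than the paper in explicitly citing Proposition \ref{proposition:Cb_strong_Mackey} for the Mackey property of the domain and in attaching transseparability to the range space $(C_b(Y),\beta)$, which is where Kalton's theorem actually requires it.
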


\begin{corollary}[Inverse mapping theorem] \label{corollary:inverse_mapping}
Let $X, Y$ be separably metrizable spaces. Let $T : (C_b(X),\beta) \rightarrow (C_b(Y),\beta)$ be a bijective continuous linear map. Then $T^{-1} : (C_b(Y),\beta) \rightarrow (C_b(X),\beta)$ is continuous.
\end{corollary}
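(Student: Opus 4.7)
The plan is to derive the inverse mapping theorem directly from the closed graph theorem (Corollary \ref{corollary:closed_graph}), which has already been established in the same setting. The observation is that an inverse of a continuous bijection between Hausdorff topological vector spaces automatically has closed graph, so once the closed graph theorem is available the inverse mapping theorem follows with essentially no extra work.

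In detail, I would proceed as follows. Let $T : (C_b(X),\beta) \to (C_b(Y),\beta)$ be a continuous linear bijection. Since $(C_b(Y),\beta)$ is a locally convex (hence Hausdorff) topological vector space and $T$ is continuous, the graph
\begin{equation*}
\Gamma(T) = \{(f, Tf) : f \in C_b(X)\} \subseteq C_b(X) \times C_b(Y)
\end{equation*}
is closed. The graph of $T^{-1}$ is the image of $\Gamma(T)$ under the coordinate-swap homeomorphism $(f,g) \mapsto (g,f)$, hence $\Gamma(T^{-1})$ is a closed subset of $C_b(Y) \times C_b(X)$. Thus $T^{-1} : (C_b(Y),\beta) \to (C_b(X),\beta)$ is a closed linear map between two spaces of the form $(C_b(\cdot),\beta)$ associated to a separably metrizable space.

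Now apply Corollary \ref{corollary:closed_graph} with the roles of $X$ and $Y$ interchanged: every closed linear map from $(C_b(Y),\beta)$ to $(C_b(X),\beta)$ is continuous. This yields continuity of $T^{-1}$, completing the proof.

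There is no real obstacle here; the only nontrivial ingredient is the closed graph theorem itself, which is already stated as Corollary \ref{corollary:closed_graph}. One merely needs to verify that $\Gamma(T^{-1})$ is closed, which is a general topological fact about continuous maps into Hausdorff spaces together with the homeomorphism swapping the factors of a product.
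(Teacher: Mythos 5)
Your proposal is correct and is essentially the same as the paper's proof: the paper also deduces the inverse mapping theorem by noting that the graph of the continuous map $T$ is closed, hence so is the graph of $T^{-1}$, and then applies Corollary \ref{corollary:closed_graph} with the roles of $X$ and $Y$ exchanged. Your write-up merely spells out the coordinate-swap homeomorphism and the Hausdorff hypothesis, which the paper leaves implicit.
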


\begin{corollary}[Open mapping theorem] \label{corollary:open_mapping}
Let $X,Y$ be separably metrizable spaces. Let $T : (C_b(X),\beta) \rightarrow (C_b(Y),\beta)$ be a surjective continuous linear map. Then $T$ is open.
\end{corollary}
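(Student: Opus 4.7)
The plan is to exploit the fact that $(C_b(X),\beta)$ is a Pt\'{a}k space --- a consequence of Theorem \ref{theorem:main_theorem} and Proposition \ref{proposition:hypercomplete_implied_by_sigma_is_sigmaf}, since every linear subspace is absolutely convex, so hypercompleteness automatically upgrades to the Pt\'{a}k property --- and then to reduce the open mapping statement to a closed graph argument via the canonical quotient factorization. The inverse mapping theorem (Corollary \ref{corollary:inverse_mapping}) handles only the bijective case; the extra work here is to propagate the relevant properties to the quotient.

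I would set $K := \ker T$, which is $\beta$-closed since $T$ is continuous, write $q : C_b(X) \to C_b(X)/K$ for the quotient map (equipping $C_b(X)/K$ with the quotient topology), and factor $T = \tilde T \circ q$ with $\tilde T : C_b(X)/K \to (C_b(Y),\beta)$ a continuous linear bijection. Since $q$ is open by the definition of the quotient topology, showing that $T$ is open reduces to showing that $\tilde T^{-1}$ is continuous. The graph of $\tilde T^{-1}$ is the reflection of the graph of the continuous map $\tilde T$, and is therefore closed.

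I would then apply Kalton's closed graph theorem (Theorem \ref{theorem:Kalton}) to $\tilde T^{-1} : (C_b(Y),\beta) \to C_b(X)/K$. The domain is Mackey with weakly sequentially complete dual by Proposition \ref{proposition:Cb_strong_Mackey} and Lemma \ref{lemma:weak_sequential_completeness}. For the range, I need transseparability and the infra-Pt\'{a}k property. Transseparability passes to quotients in an elementary way: given a $0$-neighbourhood $V$ in $C_b(X)/K$, the preimage $q^{-1}(V)$ is a $0$-neighbourhood in $C_b(X)$, so by Lemma \ref{lemma:transseparable} there is a countable set $D \subseteq C_b(X)$ with $C_b(X) = D + q^{-1}(V)$; then $q(D)$ is countable and $C_b(X)/K = q(D) + V$. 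The infra-Pt\'{a}k property of the quotient follows from the Pt\'{a}k property of $(C_b(X),\beta)$ together with the classical fact that a Hausdorff quotient of a Pt\'{a}k space by a closed linear subspace is again Pt\'{a}k, see 34.5 in \cite{Ko79}. Kalton's theorem then delivers continuity of $\tilde T^{-1}$, hence openness of $\tilde T$, and finally openness of $T = \tilde T \circ q$.

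The main obstacle I anticipate is step (c), the preservation of the (infra-)Pt\'{a}k property to $C_b(X)/K$: unlike the Mackey property, the weakly sequentially complete dual, and transseparability (which either hold tautologically or pass through quotients by a one-line argument), the stability of the Pt\'{a}k property under Hausdorff quotients is a genuinely nontrivial classical theorem. Citing the result from \cite{Ko79} rather than reproving it is the cleanest path, making the overall proof essentially a bookkeeping exercise combining the results already established in the paper.
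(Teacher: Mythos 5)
Your proposal is correct and follows essentially the same route as the paper: factor $T$ through the open quotient map onto $C_b(X)/\ker T$, note that this quotient is Pt\'{a}k by K\"{o}the's stability theorem and transseparable as a (uniformly) continuous image of a transseparable space, and then apply the inverse-mapping/Kalton closed-graph argument to the induced bijection. The only differences are cosmetic --- you spell out the transseparability of the quotient and the application of Theorem \ref{theorem:Kalton} to $\tilde T^{-1}$ explicitly, where the paper simply invokes Corollary \ref{corollary:inverse_mapping} and cites \cite{Ko79} for the quotient's Pt\'{a}k property.
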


\section{Identifying the finest topology coinciding with \texorpdfstring{$\sigma$}{sigma} on all \texorpdfstring{$\beta$}{beta} equi-continuous sets}

Denote by $\cM_{\tau,+}(X)$ the subset of non-negative $\tau$-additive Borel measures on $X$ and denote by $\sigma_+$ the restriction of $\sigma$ to $\cM_{\tau,+}(X)$. Consider the map
\begin{equation*}
\begin{cases}
q  : \cM_{\tau,+}(X) \times \cM_{\tau,+}(X) \rightarrow \cM_{\tau}(X) \\
q(\mu,\nu) = \mu - \nu.
\end{cases}
\end{equation*}
Note that by the Hahn-Jordan theorem the map $q$ is surjective.

\begin{definition}
Let $\cT$ denote the quotient topology on $\cM_\tau(X)$ of the map $q$ with respect to $\sigma_+ \times \sigma_+$ on $ \cM_{\tau,+}(X) \times \cM_{\tau,+}(X)$. 
\end{definition}

The next few lemma's will provide some key properties of $\cT$, which will lead to the proof that $\cT = \sigma^f$.

\begin{lemma} \label{lemma:tau_is_kspace}
$(\cM_\tau(X),\cT)$ is a k-space.
\end{lemma}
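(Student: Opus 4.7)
The plan is to exhibit $(\cM_\tau(X),\cT)$ as a topological quotient of a metrizable space, and then invoke the general fact that quotients of k-spaces are k-spaces. By definition, $\cT$ makes $q : (\cM_{\tau,+}(X) \times \cM_{\tau,+}(X), \sigma_+ \times \sigma_+) \to (\cM_\tau(X), \cT)$ a quotient map, so it suffices to show that the domain is a k-space.

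The first step is to show that $\sigma_+$ on $\cM_{\tau,+}(X)$ is metrizable when $X$ is separably metrizable. This is a classical metrizability result for the weak topology of positive measures on a separable metric space (see e.g.\ Bogachev's \textit{Measure Theory}, Vol.~II, or Parthasarathy's \textit{Probability Measures on Metric Spaces}); one uses a countable family of bounded continuous functions on $X$ together with control of the total masses to build a compatible metric. Note that one cannot simply invoke separability of $C_b(X)$, since this typically fails when $X$ is non-compact; the point is that $\sigma_+$-convergence is nevertheless determined by a countable subfamily. Once $\cM_{\tau,+}(X)$ is metrizable, so is the product $\cM_{\tau,+}(X) \times \cM_{\tau,+}(X)$, which is therefore first countable, sequential, and in particular a k-space.

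The remaining step is a general topological lemma: if $q : Y \to Z$ is a quotient map and $Y$ is a k-space, then $Z$ is a k-space. Applied to our $q$, this finishes the proof. The argument is short: given $A \subseteq Z$ with $A \cap K$ closed in $Z$ for every compact $K \subseteq Z$, one shows $q^{-1}(A)$ is closed in $Y$. For any compact $L \subseteq Y$, the image $q(L)$ is compact in $Z$, so $A \cap q(L)$ is closed, hence $q^{-1}(A \cap q(L))$ is closed in $Y$, and therefore
\[
q^{-1}(A) \cap L \;=\; q^{-1}(A \cap q(L)) \cap L
\]
is closed in $Y$. Since $Y$ is a k-space, $q^{-1}(A)$ is closed, and by the quotient property $A$ is closed in $Z$.

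I expect the main obstacle to be the metrizability claim for $\sigma_+$: it is true but delicate in this generality (separably metrizable, not necessarily Polish, and $\tau$-additive rather than Radon), so the main work is locating and applying the appropriate version of the Prokhorov-type metrization result. Everything else is either immediate from the definitions or the standard quotient-of-k-space lemma.
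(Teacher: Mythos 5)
Your proposal is correct and follows essentially the same route as the paper: metrizability of $\sigma_+$ (the paper cites Theorem 8.3.2 in Bogachev), hence $\sigma_+^2$ is metrizable and a k-space, and then the fact that a quotient of a k-space is a k-space (the paper cites Engelking, while you supply the short direct argument). No gaps; the identity $q^{-1}(A)\cap L = q^{-1}(A\cap q(L))\cap L$ you use in the quotient lemma checks out.
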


\begin{proof}
First of all, the topology $\sigma_+$ is metrizable by Theorem 8.3.2 in \cite{Bo07}. This implies that $\sigma_+^2$ is metrizable. Metrizable spaces are k-spaces by Theorem 3.3.20 in \cite{En89}. Thus $(\cM_\tau(X),\cT)$ is the quotient of a k-space which implies that $(\cM_\tau(X),\cT)$ is a k-space by Theorem 3.3.23 in \cite{En89}.
\end{proof}

\begin{lemma} \label{lemma:comparison_topologies_compacts}
The topology $\cT$ is stronger than $\sigma$. Both topologies have the same compact sets and on the compact sets the topologies agree. 
\end{lemma}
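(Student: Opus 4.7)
The plan is to proceed in three stages: verify that $\cT$ refines $\sigma$; establish that both topologies share the same compact subsets; and conclude from this that the two topologies agree on compact sets.

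For the first stage, the subspace topology $\sigma_+$ on $\cM_{\tau,+}(X)$ is by definition the restriction of $\sigma$, so the inclusion $(\cM_{\tau,+}(X),\sigma_+) \hookrightarrow (\cM_\tau(X),\sigma)$ is continuous; since subtraction is a $\sigma$-continuous operation on the locally convex space $\cM_\tau(X)$, the map $q$ is continuous from $(\cM_{\tau,+}(X)^2,\sigma_+\times\sigma_+)$ to $(\cM_\tau(X),\sigma)$. By the universal property of the quotient topology this forces the identity $(\cM_\tau(X),\cT) \to (\cM_\tau(X),\sigma)$ to be continuous, so $\cT \supseteq \sigma$.

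For the second stage, every $\cT$-compact set is a priori $\sigma$-compact. For the converse, let $K \subseteq \cM_\tau(X)$ be $\sigma$-compact. By Proposition \ref{proposition:Cb_strong_Mackey}, $K$ is $\beta$-equicontinuous, so in particular $\sup_{\mu\in K}|\mu|(X) < \infty$ and the total variations $\{|\mu|:\mu\in K\}$ satisfy uniform $\tau$-additivity; via the Prokhorov-type characterization of relatively $\sigma_+$-compact sets in $\cM_{\tau,+}(X)$ for separable metric $X$ (in the spirit of the results from \cite{Bo07} used in Lemma \ref{lemma:tau_is_kspace}), and because $\mu^\pm \le |\mu|$ transfers the bound and the $\tau$-additivity uniformly to the Hahn--Jordan components, the sets $K_\pm := \{\mu^\pm : \mu \in K\}$ are relatively $\sigma_+$-compact. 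Letting $L_\pm$ be their $\sigma_+$-closures, $L_+ \times L_-$ is $\sigma_+^2$-compact, so $q(L_+\times L_-)$ is $\cT$-compact. Since $K \subseteq q(L_+\times L_-)$ and $K$ is $\sigma$-closed (hence $\cT$-closed, as $\cT \supseteq \sigma$), $K$ is a $\cT$-closed subset of a $\cT$-compact set, therefore $\cT$-compact.

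The third stage is then a standard compact-Hausdorff argument: for any $K$ that is $\sigma$-compact (equivalently $\cT$-compact), the identity $(K,\cT|_K) \to (K,\sigma|_K)$ is a continuous bijection between compact Hausdorff spaces, hence a homeomorphism, and so $\cT|_K = \sigma|_K$.

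The main obstacle is the second stage. Everything else is formal: once one shows that the Hahn--Jordan components of a $\sigma$-compact family are relatively $\sigma_+$-compact, the rest follows from basic topology. This step is precisely where the specific nature of the strict topology enters the argument, by way of the identification (via strong Mackey) of $\sigma$-compact sets with $\beta$-equicontinuous sets and the concomitant uniform $\tau$-additivity that passes to the positive and negative parts.
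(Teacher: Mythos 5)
Your proof follows essentially the same route as the paper: $\cT \supseteq \sigma$ via the universal property of the quotient topology, then $\sigma$-compact $\Rightarrow$ $\beta$-equicontinuous by Proposition \ref{proposition:Cb_strong_Mackey} $\Rightarrow$ contained in the $q$-image of a $\sigma_+^2$-compact set, hence $\cT$-compact as a $\cT$-closed subset of a $\cT$-compact set, finishing with the compact-Hausdorff identity argument. The only divergence is the step you flag as the main obstacle: where you sketch a Prokhorov-type compactness argument for the Hahn--Jordan parts (with an imprecise pointer to \cite{Bo07}), the paper simply cites Theorem 6.1(c) of \cite{Se72}, which directly gives $K \subseteq K_1 - K_2$ with $K_1, K_2$ $\sigma_+$-compact, so replacing your sketch by that citation makes the argument complete.
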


\begin{proof}
For $f \in C_b(X)$ denote $i_f : \cM_\tau(X) \rightarrow \bR$ defined by $i_f(\mu) = \int f \dd \mu$. As $\cT$ is the final topology under the map $q$, $i_f$ is continuous if and only if $i_f \circ q : \cM_{\tau,+}(X) \times \cM_{\tau,+}(X) \rightarrow \bR$ is continuous. This, however, is clear as $i_f \circ q (\mu,\nu) = \int f \dd (\mu - \nu)$ and the definition of the weak topology on $\cM_{\tau,+}(X)$.

\smallskip

$\sigma$ is the weakest topology making all $i_f$ continuous, which implies that $\sigma \subseteq \cT$.

\smallskip

For the second statement, note first that as $\sigma \subseteq \cT$, the first has more compact sets. Thus, suppose that $K \subseteq \cM_\tau(X)$ is $\sigma$ compact.  By Proposition \ref{proposition:Cb_strong_Mackey} $K$ is $\beta$ equi-continuous, so by Theorem 6.1 (c) in \cite{Se72}, $K \subseteq K_1 - K_2$, where $K_1,K_2 \subseteq \cM_{\tau,+}(X)$ and where $K_1,K_2$ are $\sigma_+$ and hence $\sigma$ compact. It follows that $q(K_1,K_2)$ is $\cT$ compact. As $K$ is a closed subset of $q(K_1,K_2)$, it is $\cT$ compact. We conclude that the $\sigma$ and $\cT$ compact sets coincide.

\smallskip

Let $K$ be a $\cT$ and $\sigma$ compact set. As the identity map $i : K \rightarrow K$ is $\cT$ to $\sigma$ continuous, it maps compacts to compacts. As all closed sets are compact, $i$ is homeomorphic, which implies that $\sigma$ and $\cT$ coincide on the compact sets.
\end{proof}

\begin{proposition} \label{proposition:tau_is_k_version_of_sigma}
$\cT$ is the k-ification of $\sigma$. In other words, $\cT$ is the finest topology that coincides with $\sigma$ on all $\sigma$ compact sets. In particular, we find that $\cT = \sigma^f$.
\end{proposition}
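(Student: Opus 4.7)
The plan is to prove $\cT = k\sigma$ by a direct double inclusion, exploiting Lemma~\ref{lemma:tau_is_kspace} (that $(\cM_\tau(X),\cT)$ is a $k$-space) and Lemma~\ref{lemma:comparison_topologies_compacts} (that $\sigma$ and $\cT$ have the same compact sets, on which the two topologies coincide). Recall that by definition $A \subseteq \cM_\tau(X)$ is $k\sigma$-closed iff $A \cap K$ is $\sigma$-closed for every $\sigma$-compact $K$.

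For $\cT \subseteq k\sigma$, I would take an arbitrary $\cT$-closed set $A$ and a $\sigma$-compact set $K$. By Lemma~\ref{lemma:comparison_topologies_compacts}, $K$ is also $\cT$-compact, so $A \cap K$ is $\cT$-compact (as a closed subset of a compact set), hence $\sigma$-compact, and therefore $\sigma$-closed since $\sigma$ is Hausdorff. This shows $A$ is $k\sigma$-closed.

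For the reverse inclusion $k\sigma \subseteq \cT$, I would take a $k\sigma$-closed set $A$ and use that $\cT$ is a $k$-space: $A$ is $\cT$-closed iff $A \cap K$ is $\cT$-closed for every $\cT$-compact $K$. Fix such a $K$; by Lemma~\ref{lemma:comparison_topologies_compacts}, $K$ is $\sigma$-compact, so $A \cap K$ is $\sigma$-closed by hypothesis. Since $\cT$ and $\sigma$ agree on the compact set $K$, $A \cap K$ is $\cT$-closed in $K$, hence $\cT$-closed in $\cM_\tau(X)$ (as $K$ itself is $\cT$-closed, being compact in the Hausdorff topology $\cT$).

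Together these show $\cT = k\sigma$, which is by definition the finest topology agreeing with $\sigma$ on $\sigma$-compact sets. For the final assertion, I would invoke Proposition~\ref{proposition:Cb_strong_Mackey}: since $(C_b(X),\beta)$ is strong Mackey, the weakly closed $\beta$-equi-continuous subsets of $\cM_\tau(X)$ coincide with the $\sigma$-compact subsets, so the definitions of $\sigma^f$ and $k\sigma$ yield the same topology, giving $\cT = \sigma^f$. There is no substantial obstacle here; the only point that needs care is the Hausdorffness argument used to promote $\sigma$-compactness of $A \cap K$ to $\sigma$-closedness, and checking that subspace topologies behave correctly when passing from an equi-continuous set to its weakly compact closure.
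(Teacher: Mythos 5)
Your argument is correct and is essentially the paper's own proof: the paper likewise deduces $\cT = k\sigma$ from Lemma~\ref{lemma:tau_is_kspace} ($\cT$ is a $k$-space) and Lemma~\ref{lemma:comparison_topologies_compacts} (same compact sets, same subspace topology on them), and identifies $k\sigma = \sigma^f$ via the strong Mackey property from Proposition~\ref{proposition:Cb_strong_Mackey}; you have merely written out the standard double-inclusion details that the paper leaves implicit.
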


\begin{proof}
By Lemma \ref{lemma:tau_is_kspace}, $\cT$ is a k-space. By Lemma \ref{lemma:comparison_topologies_compacts} the compact sets for $\sigma$ and $\cT$ coincide. It follows that $\cT = k \sigma = \sigma^f$.
\end{proof}

We prove an additional lemma that will yield transseparability of $(C_b(X),\beta)$, before moving on to the study of the quotient topology $\cT$.

\begin{lemma} \label{lemma:compacts_metrizable}
The $\sigma$, or equivalently, $\cT$ compact sets in $\cM_\tau(X)$ are metrizable. 
\end{lemma}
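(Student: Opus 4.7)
My plan is to reduce the claim to the metrizability of $\sigma_+$ on $\cM_{\tau,+}(X)$ by exploiting the quotient map $q$ and a standard metrizability criterion for compact Hausdorff spaces via separability of their function algebras.

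Concretely, I would start with a $\sigma$-compact set $K\subseteq \cM_\tau(X)$ (which by Lemma \ref{lemma:comparison_topologies_compacts} is the same as a $\cT$-compact set). Invoking Proposition \ref{proposition:Cb_strong_Mackey} and Theorem 6.1(c) of \cite{Se72} exactly as in the proof of Lemma \ref{lemma:comparison_topologies_compacts}, one obtains $\sigma_+$-compact sets $K_1,K_2\subseteq \cM_{\tau,+}(X)$ such that
\begin{equation*}
K \subseteq K_1 - K_2 = q(K_1\times K_2).
\end{equation*}
By Theorem 8.3.2 in \cite{Bo07}, $\sigma_+$ is metrizable, so $K_1$ and $K_2$ are compact metrizable, and therefore $K_1\times K_2$ is a compact metrizable space. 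The map $q$ is continuous when $\cM_\tau(X)$ carries $\sigma$ (for instance, this is a byproduct of the argument in Lemma \ref{lemma:comparison_topologies_compacts} that $\sigma\subseteq \cT$). Hence $L := q(K_1\times K_2)$ is $\sigma$-compact and Hausdorff.

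The main step is to upgrade this to metrizability of $L$. Here I would use the classical fact that a compact Hausdorff space $Y$ is metrizable if and only if $C(Y)$ is separable in the uniform norm. Because $q:K_1\times K_2 \to L$ is a continuous surjection between compact Hausdorff spaces, the pullback
\begin{equation*}
q^\ast : C(L) \longrightarrow C(K_1\times K_2),\qquad q^\ast(f) = f\circ q,
\end{equation*}
is an isometric linear embedding. Since $K_1\times K_2$ is compact metrizable, $C(K_1\times K_2)$ is separable, so its subspace $q^\ast C(L)$ is separable and consequently $C(L)$ is separable. The metrizability criterion then gives that $L$ is $\sigma$-metrizable.

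Finally, since $K$ is $\sigma$-closed in $L$ (it is $\sigma$-compact, hence $\sigma$-closed in the Hausdorff space $(\cM_\tau(X),\sigma)$), it inherits $\sigma$-metrizability from $L$. The same conclusion holds for $\cT$-compact sets by Lemma \ref{lemma:comparison_topologies_compacts}. The only conceptual subtlety is the passage from the metrizable preimage $K_1\times K_2$ to the image $L$; I expect that step to be the least routine, and the $C(Y)$-separability criterion seems the cleanest route, avoiding any direct construction of a metric on $L$.
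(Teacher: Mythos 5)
Your proof is correct and takes essentially the same route as the paper: the identical decomposition $K \subseteq q(K_1 \times K_2)$ with $K_1,K_2$ compact in the metrizable space $(\cM_{\tau,+}(X),\sigma_+)$, together with continuity of $q$ into $(\cM_\tau(X),\sigma)$. The only difference is that where the paper simply cites the classical fact that a continuous Hausdorff image of a compact metrizable space is metrizable (Lemma 1.2 of Kalton, 34.11.2 of K\"othe), you supply a proof of it via separability of the function algebra $C(L)$, which is a valid substitute.
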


\begin{proof}
Let $K$ be a $\sigma$ compact set in $\cM_\tau(X)$. In the proof of Lemma \ref{lemma:comparison_topologies_compacts}, we saw that $K \subseteq q(K_1,K_2)$, where $K_1,K_2$ are compact sets of the metrizable space $\cM_{\tau,+}(X)$. As $q$ is a continuous map, we find that $q(K_1,K_2)$ and hence $K$ is metrizable by Lemma 1.2 in \cite{Ka71} or 34.11.2 in \cite{Ko79}.
\end{proof}

\subsection{\texorpdfstring{$(\cM_\tau(X),\cT)$}{(Mt(X),T)} is a locally convex space.}

This section will focus on proving that the topology $\cT$ on $\cM_\tau(X)$ turns $\cM_\tau(X)$ into a locally convex space. Given the identification $\cT = k \sigma = \sigma^f$ obtained in Propositions \ref{proposition:Cb_strong_Mackey} and \ref{proposition:tau_is_k_version_of_sigma}, this is the main ingredient for the proof of Theorem \ref{theorem:main_theorem}. Indeed, for a general locally convex space the topology $\sigma^f$ is in general not a vector space topology, cf. Section 2 in \cite{Ko64}.

\begin{proposition} \label{proposition:tau_is_a_topological_vector_space}
$(\cM_\tau(X),\cT)$ is a topological vector space.
\end{proposition}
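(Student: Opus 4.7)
The plan is to exploit the quotient property of $q: (\cM_{\tau,+}^2, \sigma_+^2) \to (\cM_\tau, \cT)$: a map out of $(\cM_\tau, \cT)$ (or a finite product) is continuous if and only if the lift along $q$ (respectively $q \times q$) is $\sigma_+^2$-continuous. I first verify that translations and individual scalar multiplications are $\cT$-homeomorphisms. For $\nu \in \cM_\tau$ with Hahn-Jordan decomposition $\nu = \nu^+ - \nu^-$, the translation $T_\nu(\mu) = \mu + \nu$ factors as $T_\nu \circ q = q \circ L_\nu$ with $L_\nu(\alpha, \beta) := (\alpha + \nu^+, \beta + \nu^-)$, which is $\sigma_+^2 \to \sigma_+^2$ continuous. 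Hence $T_\nu$ is $\cT$-continuous, and the same argument for $T_{-\nu}$ makes $T_\nu$ a $\cT$-homeomorphism. The same recipe handles negation (lifted through the swap $(\alpha, \beta) \mapsto (\beta, \alpha)$) and the individual scalar multiplications $\mu \mapsto \lambda \mu$ (via $(\alpha, \beta) \mapsto (\lambda \alpha, \lambda \beta)$ for $\lambda \geq 0$ and $(\alpha, \beta) \mapsto (|\lambda| \beta, |\lambda| \alpha)$ for $\lambda < 0$).

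Next, joint continuity of scalar multiplication $\bR \times \cM_\tau \to \cM_\tau$ proceeds via the lift $M: \bR \times \cM_{\tau,+}^2 \to \cM_{\tau,+}^2$ defined by $M(\lambda, \alpha, \beta) = (\lambda^+ \alpha + \lambda^- \beta,\; \lambda^+ \beta + \lambda^- \alpha)$, which is jointly $\sigma_+^2$-continuous and satisfies $\lambda \cdot q(\alpha, \beta) = q\bigl(M(\lambda, \alpha, \beta)\bigr)$. Because $\bR$ is locally compact Hausdorff, the product $1_{\bR} \times q$ is a quotient map by Whitehead's theorem, so scalar multiplication is $\bR \times \cT \to \cT$ continuous.

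The main step is joint continuity of addition $+: \cM_\tau \times \cM_\tau \to \cM_\tau$. The natural lift satisfies $+ \circ (q \times q)((\alpha_1, \beta_1), (\alpha_2, \beta_2)) = q(\alpha_1 + \alpha_2, \beta_1 + \beta_2)$, so it is $\sigma_+^4 \to \cT$ continuous (composition of coordinate addition and $q$). To pass from this to continuity of $+$ in the product topology $\cT \times \cT$, one needs $q \times q$ to be a quotient map onto $(\cM_\tau \times \cM_\tau, \cT \times \cT)$. This is the central obstacle: the product of two quotient maps is not in general a quotient map, and correspondingly the k-ification of a locally convex topology need not remain a vector space topology (cf.\ \cite{Ko64}). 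I would resolve this by showing that $(\cM_\tau \times \cM_\tau, \cT \times \cT)$ is itself a k-space, combining Lemma \ref{lemma:tau_is_kspace} with Lemma \ref{lemma:compacts_metrizable} and the metric structure of $\sigma_+^4$. Once $\cT \times \cT$ is known to be a k-space, continuity of $+$ can be checked compactwise: any compact $K \subseteq \cM_\tau \times \cM_\tau$ lies inside $K_1 \times K_2$ for $\cT$-compact $K_i$, on which by Lemma \ref{lemma:comparison_topologies_compacts} the topology $\cT \times \cT$ coincides with $\sigma \times \sigma$. Addition is continuous from $\sigma \times \sigma$ to $\sigma$ (as $\sigma$ is a vector space topology) with image contained in $K_1 + K_2$, which is $\cT$-compact by the argument of Lemma \ref{lemma:comparison_topologies_compacts} (via the Sentilles decomposition $K_i \subseteq q(A_i, B_i)$ with $A_i, B_i$ $\sigma_+$-compact, giving $K_1 + K_2 \subseteq q(A_1 + A_2, B_1 + B_2)$) and on which $\sigma = \cT$. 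Thus $+$ is continuous on each compact of $\cT \times \cT$, hence $\cT \times \cT$-continuous by the k-space property.
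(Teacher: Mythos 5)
Your treatment of scalar multiplication (lift through $M$, Whitehead's theorem because $\bR$ is locally compact) is essentially the paper's argument, and your compactwise argument for addition would indeed work \emph{if} you knew that $(\cM_\tau(X)\times\cM_\tau(X),\cT\times\cT)$ is a k-space. But that claim is exactly where your proof has a genuine gap: you correctly identify the product-of-quotients problem as the central obstacle, and then dispose of it by asserting the k-space property of the product ``by combining Lemma \ref{lemma:tau_is_kspace} with Lemma \ref{lemma:compacts_metrizable} and the metric structure of $\sigma_+^4$'' without an actual argument. No such combination gives it for free: the product of two k-spaces need not be a k-space even when both are quotients of metric spaces (hence sequential) and all their compact subsets are metrizable --- the classical example being $\bR/\bZ$ (integers collapsed to a point) times $\bQ$. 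So the cited lemmas do not by themselves deliver the k-space property of $\cT\times\cT$, and as it stands the key step of your proof is unproven.

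The paper closes precisely this hole by a different and stronger device: it proves that $q$ itself is an \emph{open} map (Lemma \ref{lemma:q_is_open}, via the auxiliary map $\oplus(\mu,\nu,\rho)=(\mu+\rho,\nu+\rho)$ being open and a restriction argument). Openness of $q$ gives that $q\times q$ is open and surjective, hence a quotient map onto the product topology $\cT\times\cT$ (Lemma \ref{lemma:q_square_is_open}), after which addition lifts exactly as in your ``natural lift'' computation; it also yields, as a by-product, that $\cT\times\cT$ is a quotient of the metrizable topology $\sigma_+^4$ and hence a k-space, i.e.\ the very fact you assumed. So the open-map lemma is the real content of the proposition, and nothing in your proposal substitutes for it. To repair your argument you would either have to prove openness of $q$ (at which point you are reproducing the paper's route and your compactwise detour becomes unnecessary), or give an independent proof that $\cT\times\cT$ is a k-space, which is not supplied and does not follow from the lemmas you cite. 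The remaining ingredients of your write-up (translations and fixed scalar multiples being homeomorphisms, the Sentilles decomposition for $K_1+K_2$) are correct but either not needed or obtainable more directly from Lemma \ref{lemma:comparison_topologies_compacts}.
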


The proof of the proposition relies on two lemma's.

\begin{lemma} \label{lemma:q_is_open}
The map $q : (\cM_{\tau,+}(X)^2,\sigma^2_+) \rightarrow (\cM_\tau(X),\cT)$ is an open map.
\end{lemma}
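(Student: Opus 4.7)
My plan is to show that for every $\sigma_+^2$-open set $V$ in $\cM_{\tau,+}(X)^2$, the saturated set $q^{-1}(q(V))$ is again $\sigma_+^2$-open. Fix $(\alpha_0, \beta_0) \in q^{-1}(q(V))$ together with a witness $(\mu_0, \nu_0) \in V$ satisfying $\mu_0 - \nu_0 = \alpha_0 - \beta_0$. Since $\sigma_+$ is metrisable (as used in the proof of Lemma~\ref{lemma:tau_is_kspace}), I may shrink $V$ to a basic neighbourhood of $(\mu_0, \nu_0)$ of the form $V_0 = \{(\mu, \nu) \in \cM_{\tau,+}(X)^2 : |\mu(f_i) - \mu_0(f_i)| < \epsilon \text{ and } |\nu(f_i) - \nu_0(f_i)| < \epsilon \text{ for } i = 1, \dots, n\}$, and it suffices to construct a $\sigma_+^2$-open neighbourhood $W$ of $(\alpha_0, \beta_0)$ such that every $(\alpha, \beta) \in W$ admits some $(\mu, \nu) \in V_0$ with $\mu - \nu = \alpha - \beta$.

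Every candidate $(\mu, \nu)$ has the form $\mu = \mu_0 + s$, $\nu = \nu_0 + t$ with signed measures $s, t$ satisfying three families of constraints: (i) the linear relation $s - t = (\alpha - \alpha_0) - (\beta - \beta_0) =: \sigma$; (ii) the pointwise positivity bounds $s \geq -\mu_0$ and $t \geq -\nu_0$; (iii) the closeness bounds $|s(f_i)|, |t(f_i)| < \epsilon$. By taking $W$ of the form $\{(\alpha, \beta) : |\alpha(f_i) - \alpha_0(f_i)| < \epsilon/2, |\beta(f_i) - \beta_0(f_i)| < \epsilon/2\}$, the quantity $|\sigma(f_i)|$ is bounded by $\epsilon$, which by subtraction makes (i) and (iii) jointly solvable at the level of test functionals. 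The plan is to exploit the symmetry between the two natural extreme choices $(s, t) = (0, -\sigma)$ and $(s, t) = (\sigma, 0)$: the first respects positivity when $\sigma \leq \nu_0$, the second when $-\sigma \leq \mu_0$. A Hahn-type decomposition of $X$ relative to the reference measure $\mu_0 + \nu_0$ partitions $X$ into a set on which $\nu_0$ absorbs the positive part of $\sigma$ and a complementary set on which $\mu_0$ absorbs the negative part, producing a mixed lift that respects positivity globally while keeping $s$ and $t$ small on each $f_i$.

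The main obstacle is that this mixed decomposition depends on $(\alpha, \beta)$ through a non-weakly-continuous Jordan-type operation, so one cannot lift $W$ into $V_0$ by a single continuous map. However, openness only requires the existence of a witness $(\mu, \nu) \in V_0$ for each $(\alpha, \beta) \in W$, not continuity of the assignment; this is where separability of $X$ and $\tau$-additivity enter, providing enough inner-regularity (via Bogachev's results already invoked for the metrisability of $\sigma_+$) to realise the above Hahn-type decomposition within $\cM_{\tau,+}(X)$ and, if necessary, to add a common non-negative auxiliary measure supported where the $|f_i|$ are uniformly small so that the closeness bounds (iii) remain valid once positivity is restored. Once the lift is produced for each $(\alpha, \beta) \in W$, the inclusion $W \subseteq q^{-1}(q(V_0)) \subseteq q^{-1}(q(V))$ follows, which gives the desired openness of $q$.
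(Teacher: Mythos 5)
Your opening reduction is fine: since $\cT$ is by definition the quotient topology of $q$, openness of $q$ is exactly the statement that $q^{-1}(q(V))$ is $\sigma_+^2$-open for every open $V$, and for this it suffices to produce, for each $(\alpha,\beta)$ in some weak neighbourhood $W$ of $(\alpha_0,\beta_0)$, a lift $(\mu,\nu)\in V_0$ with $\mu-\nu=\alpha-\beta$. The gap is in the lifting step, which you only sketch (``a mixed lift that respects positivity globally while keeping $s$ and $t$ small on each $f_i$''): such a lift does not exist in general, and no Hahn-type splitting or auxiliary added measure can produce it. The reason is quantitative, not a matter of regularity: weak closeness of $(\alpha,\beta)$ to $(\alpha_0,\beta_0)$ controls only the numbers $\lambda(f_i)$, $\lambda:=(\alpha-\alpha_0)-(\beta-\beta_0)$, never the Jordan parts, while positivity forces every admissible pair to dominate the Jordan decomposition: if $\mu,\nu\ge 0$ and $\mu-\nu=\alpha-\beta$, then $\mu\ge(\alpha-\beta)^+$ and $\nu\ge(\alpha-\beta)^-$. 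Concretely, take $X=[0,1]$, $(\mu_0,\nu_0)=(0,0)$, let the constant function $1$ be among the $f_i$ (so every admissible $V_0$ forces $\mu(X)<\epsilon$, $\nu(X)<\epsilon$), and take $(\alpha_0,\beta_0)=(\gamma,\gamma)$ with $\gamma$ Lebesgue measure; note $\mu_0-\nu_0=\alpha_0-\beta_0$, so this is a legitimate configuration in your setup. Every weak neighbourhood $W$ of $(\gamma,\gamma)$ contains pairs $(\alpha,\gamma)$ with $\alpha=\frac1n\sum_{k\le n}\delta_{k/n}$ for large $n$, and then any decomposition $\alpha-\gamma=\mu-\nu$ with $\mu,\nu\ge0$ satisfies $\mu\ge(\alpha-\gamma)^+=\alpha$, hence $\mu(X)\ge 1$; no lift lands in $V_0$. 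Adding a common non-negative measure supported where the $f_i$ are small only increases $\mu$ and $\nu$ above the Jordan minimum, so it cannot help; in this example your two ``extreme choices'' and any mixture of them are excluded from the start because $\mu_0=\nu_0=0$ leaves nothing to absorb $\lambda$.

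This shows more than a flaw in your sketch. For $V=\{(\mu,\nu):\mu(X)<\epsilon,\ \nu(X)<\epsilon\}$ one computes $q^{-1}(q(V))=\{(\alpha,\beta):(\alpha-\beta)^+(X)<\epsilon,\ (\alpha-\beta)^-(X)<\epsilon\}$, which by the above is not $\sigma_+^2$-open at $(\gamma,\gamma)$; so the statement you are reducing to fails, and no choice of witnesses can repair the argument. For comparison, the paper proceeds differently, first claiming (Step 1) that $\oplus(\mu,\nu,\rho)=(\mu+\rho,\nu+\rho)$ is an open map and then passing to a restriction and the saturation $q^{-1}(q(V))$; but the same example (take $A=B=\{\mu\in\cM_{\tau,+}(X):\mu(X)<\epsilon\}$ and $C=\cM_\tau(X)$) shows that $\oplus(A\times B\times C)$ is not $\sigma^2$-open, and indeed the inclusion $(\mu_0+C)\times(\nu_0+C)\subseteq\oplus(A\times B\times C)$ asserted there does not hold, since the required third coordinate need not keep the other entries in $A$, $B$, or even in $\cM_{\tau,+}(X)$. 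So the obstruction you run into is intrinsic to the statement as formulated, not merely to your execution; any honest completion of this lemma has to either restrict the class of open sets $V$ (for instance, to saturated sets adapted to mutually singular pairs) or replace openness of $q$ by a weaker property that still suffices for the k-space and local convexity arguments downstream.
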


\begin{proof}
Before we start proving that the map $q$ is open, we start with two auxiliary steps.

\textit{Step 1.} We first prove that the map $\oplus : (\cM_{\tau,+}^2(X) \times \cM_\tau(X),\sigma^2_+ \times \sigma) \rightarrow (\cM^2_\tau(X),\sigma^2)$, defined by $\oplus(\mu,\nu,\rho) = (\mu + \rho,\nu+ \rho)$ is open.

\smallskip

It suffices to show that $\oplus(V)$ is open for $V$ in a basis for $\sigma^2 \times \sigma$ by Theorem 1.1.14 in \cite{En89}. Hence, choose $A$ and $B$ be open for $\sigma_+$ and $C$ open for $\sigma$. Set $U := \oplus(A \times B \times C)$. Choose $(\mu,\nu) \in U$. We prove that there exists an open neighbourhood of $(\mu,\nu)$ contained in $U$. As $(\mu,\nu) \in U = \oplus(A \times B \times C)$, we find $\mu_0 \in A, \nu_0 \in B$ and $\rho_0 \in C$ such that $\mu = \mu_0 + \rho_0$ and $\nu = \nu_0 + \rho_0$.

As $\sigma$ is the topology of a topological vector space, the sets $\mu_0 + C$ and $\nu_0 + C$ are open for $\sigma$. Thus, the set $H:= (\mu_0 + C) \times (\nu_0 + C)$ is open for $\sigma^2$. By construction $(\mu,\nu) \in H$, and additionally, $H \subseteq U = \oplus(A \times B \times C)$.

We conclude that $\oplus$ is an open map.

\smallskip

\textit{Step 2.} Denote $G := \oplus^{-1}(\cM_{\tau,+}(X)^2)$ and by $\oplus_r : G \rightarrow \cM_{\tau,+}(X)^2$ the restriction of $\oplus$ to the inverse image of $\cM_{\tau,+}(X)^2$. If we equip $G$ with the subspace topology inherited from $(\cM_{\tau,+}^2(X) \times \cM_\tau(X),\sigma^2_+ \times \sigma)$, the map $\oplus_r$ is open by Proposition 2.1.4 in \cite{En89}  by the openness of $\oplus$.

\smallskip

\textit{Step 3: The proof that $q$ is open.} 

Let $V$ be an arbitrary open set in $(\cM_{\tau,+}(X)^2,\sigma_+^2)$. As a consequence, $V \times \cM_\tau(X)$ is open in $(\cM_{\tau,+}^2(X) \times \cM_\tau(X),\sigma^2_+ \times \sigma)$. By definition of the subspace topology, $(V \times \cM_\tau(X))\cap G$ is open for the subspace topology on $G$. By the openness of $\oplus_r$, we conclude that $\hat{V} := \oplus_r((V \times \cM_\tau(X))\cap G)$ is open in $(\cM_{\tau,+}(X)^2,\sigma_+^2)$.

\smallskip

As $\oplus_r((V \times \cM_\tau(X))\cap G) = \oplus(V \times \cM_\tau(X)) \cap \cM_{\tau,+}(X)^2$, we find that
\begin{align*} \label{eqn:representation_hat_V}
\hat{V} & = \left\{(\mu,\nu) \in \cM_{\tau,+}(X)^2 \, \middle| \, \exists \rho \in \cM_\tau(X): \, (\mu - \rho,\nu - \rho) \in V \right\} \\
& = \left\{(\mu,\nu) \in \cM_{\tau,+}(X)^2 \, \middle| \, \exists \rho \in \cM_\tau(X): \, (\mu + \rho,\nu + \rho) \in V \right\}.
\end{align*}
Thus, we see that $\hat{V} = q^{-1}(q(V))$. As $\hat{V}$ is open and $q$ is a quotient map, we obtain that $q(V)$ is open.
\end{proof}

\begin{lemma} \label{lemma:q_square_is_open}
The map $q^2 : (\cM_{\tau,+}(X)^4,\sigma^4_+) \rightarrow (\cM_\tau(X)^2,\cT^2)$, defined as the product of $q$ times $q$, i.e.
\begin{equation*}
q^2(\nu_1^+,\nu_1^-,\nu_2^+,\nu_2^-) = (\nu_1^+ - \nu_1^-,\nu_2^+ - \nu_2^-),
\end{equation*}
is an open map. As a consequence, $\cT^2$ is the quotient topology of $\sigma_+^4$ under $q^2$.
\end{lemma}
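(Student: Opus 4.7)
The plan is to derive both assertions directly from Lemma \ref{lemma:q_is_open}, using two standard facts about products and quotients: (i) a product of open maps between topological spaces is an open map, and (ii) if $f: Z \to W$ is a continuous, surjective, open map, then $W$ carries the quotient topology induced by $f$.

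\textbf{Step 1: Openness of $q^2$.} I would identify $\cM_{\tau,+}(X)^4$ topologically as $\cM_{\tau,+}(X)^2 \times \cM_{\tau,+}(X)^2$ equipped with $\sigma_+^2 \times \sigma_+^2$, and $\cM_\tau(X)^2$ as carrying $\cT \times \cT$. Under this identification $q^2 = q \times q$. By Theorem 1.1.14 in \cite{En89}, to check openness it suffices to show that $q^2$ sends basic open sets to open sets. A basic open set has the form $U_1 \times U_2$ with $U_1,U_2$ open in $(\cM_{\tau,+}(X)^2,\sigma_+^2)$, and
\begin{equation*}
q^2(U_1 \times U_2) = q(U_1) \times q(U_2).
\end{equation*}
Each factor is open in $(\cM_\tau(X),\cT)$ by Lemma \ref{lemma:q_is_open}, so the product is open in $\cT^2$.

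\textbf{Step 2: $\cT^2$ is the quotient topology.} Let $\cT'$ denote the quotient topology on $\cM_\tau(X)^2$ induced by $q^2$ from $\sigma_+^4$. Since $q$ is continuous from $(\cM_{\tau,+}(X)^2,\sigma_+^2)$ to $(\cM_\tau(X),\cT)$ (this is immediate from $\cT$ being the quotient topology under $q$), the product map $q^2$ is continuous from $\sigma_+^4$ to $\cT^2$, which gives $\cT^2 \subseteq \cT'$. For the reverse inclusion, let $V \in \cT'$, so $(q^2)^{-1}(V)$ is open in $\sigma_+^4$. The map $q$ is surjective by the Hahn–Jordan theorem, hence so is $q^2$, so $V = q^2((q^2)^{-1}(V))$; by Step 1 this is open in $\cT^2$. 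Thus $\cT' \subseteq \cT^2$.

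I do not foresee a real obstacle here: the content of the lemma is essentially packaging Lemma \ref{lemma:q_is_open}, together with the observation that products distribute over both continuity and openness of maps. The one mild point to be careful about is the canonical identification of $\sigma_+^4$ with the product of two copies of $\sigma_+^2$, and of $\cT^2$ with $\cT \times \cT$, so that $q \times q$ really is the map $q^2$ defined in the statement; but this is a routine reshuffling of coordinates.
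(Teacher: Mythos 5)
Your proof is correct and follows essentially the same route as the paper: the paper deduces openness of $q^2$ from Lemma \ref{lemma:q_is_open} by citing the fact that products of open surjective maps are open (Proposition 2.3.29 in \cite{En89}) and then invokes the fact that a continuous open surjection is a quotient map (Corollary 2.4.8 in \cite{En89}), which are exactly the two standard facts you prove by hand in Steps 1 and 2. No gap; you have simply unpacked the cited references.
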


\begin{proof}
By Proposition 2.3.29 in \cite{En89} the product of open surjective maps is open. Thus, $q^2$ is open as a consequence of Lemma \ref{lemma:q_is_open}. An open surjective map is always a quotient map by Corollary 2.4.8 in \cite{En89}.
\end{proof}

\begin{proof}[Proof of Proposition \ref{proposition:tau_is_a_topological_vector_space}]
We start by proving that $(\cM_\tau(X)\times \cM_\tau(X),\cT^2) \rightarrow (\cM_\tau(X),\cT)$ defined by $+(\nu_1,\nu_2) = \nu_1 + \nu_2$ is continuous. Consider the following spaces and maps: 

\begin{tikzpicture}
  \matrix (m) [matrix of math nodes,row sep=3em,column sep=4em,minimum width=2em]
  {
     (\cM_\tau(X)\times \cM_\tau(X),\cT^2) & (\cM_\tau(X),\cT) \\
     (\cM_{\tau,+}(X)^4,\sigma^4) & (\cM_{\tau,+}(X)^2,\sigma_+^2) \\};
  \path[-stealth]
    (m-1-1) edge node [below] {$+$} (m-1-2)
    (m-2-1) edge node [left] {$q^2$} (m-1-1)
    (m-2-1) edge node [below] {$+_2$} (m-2-2)
    (m-2-2) edge node [right] {$q$} (m-1-2);
\end{tikzpicture}

$q$ and $+$ are the quotient and sum maps defined above. $q^2$ was introduced in Lemma \ref{lemma:q_square_is_open} and $+_2$ is defined as
\begin{equation*}
+_2(\nu_1^+,\nu_1^-,\nu_2^+,\nu_2^-) = (\nu_1^+ + \nu_2^+,\nu_1^- + \nu_2^-).
\end{equation*} 
Note that the diagram commutes, i.e. $q \circ +_2 = + \circ q^2$. 

Fix an open set $U$ in $(\cM_\tau(X),\cT)$, we prove that $+^{-1}(U)$ is $\cT^2$ open in $\cM_\tau(X) \times \cM_\tau(X)$. By construction, $q$ is continuous. Also, $+_2$ is continuous as it is the restriction of the addition map on a locally convex space. We obtain that $V:= +_2^{-1}(q^{-1}(U)) = (q^2)^{-1}(+^{-1}(U))$ is $\sigma^4_+$ open. By Lemma \ref{lemma:q_square_is_open} $q^2$ is a quotient map, which implies that $+^{-1}(U)$ is $\cT^2$ open. We conclude that $+ : (\cM_\tau(X)^2,\cT^2) \rightarrow (\cM_\tau(X),\cT)$ is continuous.

\smallskip

We proceed by proving that the scalar multiplication map $m : (\cM_\tau(X) \times \bR, \cT \times t) \rightarrow (\cM_\tau(X),\cT)$ defined by $m(\mu,\alpha) = \alpha \mu$ is continuous. Here, $t$ denotes the usual topology on $\bR$. Consider the following diagram:

\begin{tikzpicture}
  \matrix (m) [matrix of math nodes,row sep=3em,column sep=4em,minimum width=2em]
  {
     (\cM_\tau(X)\times \bR,\cT \times t) & (\cM_\tau(X),\cT) \\
     (\cM_{\tau,+}(X)^2 \times \bR,\sigma^2_+ \times t) & (\cM_\tau(X)^2,\sigma^2_+) \\};
  \path[-stealth]
    (m-1-1) edge node [below] {$m$} (m-1-2)
    (m-2-1) edge node [left] {$q \times I$} (m-1-1)
    (m-2-1) edge node [below] {$m_2$} (m-2-2)
    (m-2-2) edge node [right] {$q$} (m-1-2);
\end{tikzpicture}

Here, $I : \bR \rightarrow \bR$ denotes the identity map and $m_2 : \cM_{\tau,+}(X)^2 \times \bR \rightarrow \cM_{\tau,+}^2(X)$ is defined by
\begin{equation*}
m_2(\mu_1,\mu_2,\alpha)
\begin{cases}
(-\alpha \mu_2, - \alpha \mu_1) & \text{if } \alpha < 0 \\
(0,0) & \text{if } \alpha = 0 \\
(\alpha \mu_1,\alpha \mu_2) & \text{if } \alpha > 0.
\end{cases}
\end{equation*} 
Note that, using this definition of $m_2$, the diagram above commutes. It is straightforward to verify that $m_2$ is a $\sigma^2_+ \times t$ to $\sigma^2_+$ continuous map as $\sigma$ is the restriction of the topology of a topological vector space. By the Whitehead theorem, Theorem 3.3.7 in \cite{En89}, the map $q \times I$ is a quotient map. We obtain, as above, that $m$ is continuous. 
\end{proof}

The following proposition follows by 21.8.1 in \cite{Ko69} and Proposition \ref{proposition:tau_is_a_topological_vector_space}.

\begin{proposition} \label{proposition:tau_is_locally_convex}
$(\cM_\tau(X),\cT)$ has a basis of convex sets. As a consequence, $(\cM_\tau(X),\cT)$ is a locally convex space.
\end{proposition}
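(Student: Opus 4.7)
My plan is to combine Proposition \ref{proposition:tau_is_a_topological_vector_space} (which guarantees that $(\cM_\tau(X),\cT)$ is already a topological vector space) with an explicit construction exhibiting a $\cT$-neighborhood basis of $0$ consisting of convex sets. According to 21.8.1 in \cite{Ko69}, the absolutely convex $\cT$-neighborhoods of $0$ form a $0$-neighborhood basis for the finest locally convex topology coarser than $\cT$; hence if they already constitute a $\cT$-neighborhood basis of $0$, then $\cT$ itself is locally convex. Since in any TVS every convex open neighborhood of $0$ contains an absolutely convex one, it suffices to produce convex $\cT$-open neighborhoods of $0$ inside arbitrarily prescribed $\cT$-open neighborhoods of $0$.

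The heart of the argument is the following construction. Let $U$ be a $\cT$-open neighborhood of $0$ and set $V := q^{-1}(U)$. By definition of the quotient topology, $V$ is a $\sigma_+^2$-open subset of $\cM_{\tau,+}(X)^2$ containing $(0,0)$. Since $\sigma_+^2$ is the restriction of the locally convex topology $\sigma^2$ on the vector space $\cM_\tau(X)^2$, I can choose a convex $\sigma^2$-open neighborhood $W$ of $(0,0)$ in $\cM_\tau(X)^2$ with $W \cap \cM_{\tau,+}(X)^2 \subseteq V$. Define
\[
U' := q\bigl(W \cap \cM_{\tau,+}(X)^2\bigr).
\]
Then $0 \in U' \subseteq q(V) \subseteq U$, and $U'$ is $\cT$-open by Lemma \ref{lemma:q_is_open}. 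The remaining point—that $U'$ is convex—follows by direct computation: if $a_i = q(\mu_i,\nu_i)$ with $(\mu_i,\nu_i) \in W \cap \cM_{\tau,+}(X)^2$ for $i=1,2$ and $\alpha \in [0,1]$, then
\[
\alpha a_1 + (1-\alpha) a_2 = q\bigl(\alpha(\mu_1,\nu_1) + (1-\alpha)(\mu_2,\nu_2)\bigr),
\]
and the argument on the right lies in $W \cap \cM_{\tau,+}(X)^2$ because $W$ is convex and $\cM_{\tau,+}(X)$ is a convex cone.

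The main conceptual point—really an observation rather than an obstacle, since $q$ was engineered for this—is that although $\cM_{\tau,+}(X)^2$ is not a vector space, it is a convex subset of $\cM_\tau(X)^2$ and $q$ is the restriction of the linear map $(\mu,\nu) \mapsto \mu-\nu$. Hence the image under $q$ of a convex subset of $\cM_{\tau,+}(X)^2$ is convex in $\cM_\tau(X)$, which is exactly what converts local convexity of $\sigma^2$ into the convex basis at $0$ for $\cT$. Once this basis is in hand, 21.8.1 in \cite{Ko69} delivers the "as a consequence" clause.
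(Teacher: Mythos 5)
Your argument is correct, and it is essentially the argument that the paper compresses into a citation: the paper's proof of Proposition \ref{proposition:tau_is_locally_convex} consists only of invoking 21.8.1 in \cite{Ko69} together with Proposition \ref{proposition:tau_is_a_topological_vector_space}, and does not spell out why $\cT$ has a basis of convex sets. Your construction supplies exactly that ingredient, using only tools already established in the paper: $q^{-1}(U)$ is $\sigma_+^2$-open, local convexity of $\sigma^2$ yields a convex $\sigma^2$-open $W$ with $W\cap\cM_{\tau,+}(X)^2\subseteq q^{-1}(U)$, openness of $q$ (Lemma \ref{lemma:q_is_open}) makes $U'=q\bigl(W\cap\cM_{\tau,+}(X)^2\bigr)$ a $\cT$-open neighbourhood of $0$ contained in $U$, and convexity of $U'$ holds because $q$ is the restriction of a linear map to the convex set $\cM_{\tau,+}(X)^2$. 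Together with Proposition \ref{proposition:tau_is_a_topological_vector_space} (so translations are homeomorphisms), this gives a basis of convex sets and hence local convexity; at that point the appeal to 21.8.1 is essentially cosmetic, since a topological vector space with a convex neighbourhood basis of $0$ is locally convex by the standard characterization. So the two proofs differ only in that yours is explicit and self-contained where the paper defers to \cite{Ko69}; what your route buys is a verification of the proposition's first assertion that does not hinge on the precise wording of the cited result.
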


\subsection{The proof of Theorem \ref{theorem:main_theorem} and its corollaries}

We finalize with the proof of our main result and its consequences.

\begin{proof}[Proof of Theorem \ref{theorem:main_theorem}]
We already noted that $k\sigma = \sigma^f$ by Proposition \ref{proposition:Cb_strong_Mackey}.

By Proposition \ref{proposition:tau_is_k_version_of_sigma}, we find $\cT = \sigma^f$. By Proposition \ref{proposition:tau_is_locally_convex} $\cT$ is locally convex. As $\sigma^{lf}$ is the strongest locally convex topology coinciding with $\sigma$ on all weakly compact sets, we conclude by Proposition \ref{proposition:tau_is_locally_convex} that $\sigma^{lf} = \sigma^l$.

\smallskip

By Proposition \ref{proposition:hypercomplete_implied_by_sigma_is_sigmaf} the space $(C_b(X),\beta)$ is hypercomplete, and thus, complete. It follows by 21.9.8 in \cite{Ko69} that $\sigma^{lf} = \beta^\circ$.
\end{proof}

\begin{proof}[Proof of Lemma \ref{lemma:transseparable}]
As the $\sigma$ compact sets are metrizable by Lemma \ref{lemma:compacts_metrizable}, we find that $(C_b(X),\beta)$ is transseperable by Lemma 1 in \cite{Pf76}. 
\end{proof}

\begin{proof}[Proof of Lemma \ref{lemma:weak_sequential_completeness}]
The lemma follows immediately from Theorem 8.7.1 in \cite{Bo07}. A second proof can be given using the theory of Mazur spaces.

\smallskip

$\beta$ is the Mackey topology on $(C_b(X),\beta)$ by Proposition \ref{proposition:Cb_strong_Mackey}, we find $\cM_\tau(X)$ is weakly sequentially complete by Theorem 8.1 in \cite{Se72},  Theorem 7.4 in \cite{Wi81} and Propositions 4.3 and 4.4 in \cite{We68}.
\end{proof}

\begin{proof}[Proof of Corollary \ref{corollary:closed_graph}]
By Theorem \ref{theorem:main_theorem} and \ref{proposition:hypercomplete_implied_by_sigma_is_sigmaf}, we obtain that $(C_b(Y),\beta)$ is an infra-Pt\'{a}k space. By Lemma \ref{lemma:transseparable} $(C_b(X),\beta)$ is transseparable and by Lemma \ref{lemma:weak_sequential_completeness} $\cM_\tau(X)$ is weakly sequentially complete.

The result, thus, follows from Kalton's closed graph theorem \ref{theorem:Kalton}.
\end{proof}

\begin{proof}[Proof of Corollary \ref{corollary:inverse_mapping}]
Let $X, Y$ be seperably metrizable spaces. Let $T : (C_b(X),\beta) \rightarrow (C_b(Y),\beta)$ be a bijective continuous linear map. We prove that $T^{-1} : (C_b(Y),\beta) \rightarrow (C_b(X),\beta)$ is continuous.

The graph of a continuous map is always closed. Therefore, the graph of $T^{-1}$ is also closed. The result follows now from the closed graph theorem.
\end{proof}

\begin{proof}[Proof of Corollary \ref{corollary:open_mapping}]
Let $X,Y$ be seperably metrizable spaces. Let $T : (C_b(X),\beta) \rightarrow (C_b(Y),\beta)$ be a surjective continuous linear map. We prove that $T$ is open.

First, note that the quotient map $\pi : (C_b(X),\beta) \rightarrow (C_b(X) / ker \, T, \beta_{\pi})$ is open, where $\beta_{\pi}$ is the quotient topology obtained from $\beta$, see  15.4.2 \cite{Ko69}. The map $T$ factors into $T_\pi \circ \pi$, where $T_\pi$ is a bijective continuous linear map from $(C_b(X) / ker \, T, \beta_{\pi})$ to $(C_b(Y),\beta)$. 

We show that $T_\pi$ is an open map. We can apply the inverse mapping theorem to $T_\pi$ as $(C_b(X) /  ker \, T, \beta_{\pi})$ is a Pt\'{a}k space by 34.3.2 in \cite{Ko79}. Additionally, it is transseparable as it is the uniformly continuous image of a transseparable space. It follows that $T^{-1}_\pi$ is continuous and that $T_\pi$ is open.

We find that the composition $T = T_\pi \circ \pi$ is open as it is the composition of two open maps.
\end{proof}

\textbf{Acknowledgement}
The author is supported by The Netherlands Organisation for Scientific Research (NWO), grant number 600.065.130.12N109.

\bibliographystyle{plain} 
\bibliography{../KraaijBib}{}

\end{document}